\theoremstyle{plain}
\newtheorem{theorem}{Theorem}
\newtheorem{lemma}[theorem]{Lemma}
\newtheorem{corollary}[theorem]{Corollary}
\newtheorem{proposition}[theorem]{Proposition}
\newtheorem*{proposition*}{Proposition}
\theoremstyle{definition}
\theoremstyle{remark}
\newtheorem{remark}[theorem]{Remark}
\theoremstyle{definition}
\newtheorem*{remark*}{Remark}
\newcommand{\red}[1]{#1} 
\newcommand{\Z}{\mathbb{Z}}
\newcommand{\R}{\mathbb{R}}
\newcommand{\C}{\mathbb{C}}
\newcommand{\N}{\mathbb{N}}
\newcommand{\e}{{\rm e}}
\newcommand{\1}{{\bf 1}}
\newcommand{\cD}{{\mathcal D}}
\newcommand{\bc}{{\bm c}}
\newcommand{\bd}{{\bm d}}
\newcommand{\bn}{{\bm n}}
\newcommand{\br}{{\bm r}}
\newcommand{\bs}{{\bm s}}
\newcommand{\bC}{{\bm C}}
\newcommand{\bD}{{\bm D}}
\newcommand{\bN}{{\bm N}}
\newcommand{\bR}{{\bm R}}
\newcommand{\bS}{{\bm S}}
\newcommand{\eps}{\varepsilon}
\newcommand{\vphi}{\varphi}
\newcommand{\df}{\mathop{}\!\mathrm{d}}
\newcommand{\qt}{{\widetilde q}}
\newcommand{\ud}{{\mathfrak u}}
\renewcommand{\bar}{\overline}
\renewcommand{\hat}{\widehat}
\newcommand{\abs}[1]{\left| #1 \right|}
\renewcommand{\mod}[1]{\ ({\rm mod\ }#1)}
\newcommand{\ssum}[1]{\sum_{\substack{#1}}}
\newcommand{\summ}[2]{\mathop{\foreach \n in {1, ..., #1}{\sum}}_{\substack{#2}}}
\newcommand{\pmat}[1]{\begin{pmatrix} #1 \end{pmatrix}}
\DeclareMathOperator{\supp}{supp}
\DeclareMathOperator{\cond}{cond}
\renewcommand\Re{\operatorname{Re}}
\title{One-level density estimates for Dirichlet $L$-functions with extended support}
\author{Sary Drappeau}
\address{Aix Marseille Universite, CNRS, Centrale Marseille, I2M UMR 7373, 13453 Marseille, France}
\email[Sary Drappeau]{sary-aurelien.drappeau@univ-amu.fr}
\author{Kyle Pratt}
\address{All Souls College, University of Oxford, UK}
\email[Kyle Pratt]{Kyle.Pratt@maths.ox.ac.uk}
\author{Maksym Radziwi{\l\l}}
\address{Department of Mathematics, Caltech, 1200 E California BLVD, Pasadena, CA 91125, USA}
\email[Maksym Radziwi{\l\l}]{maksym.radziwill@gmail.com}
\date{\today}
\subjclass[2010]{11M50 (Primary); 11M06, 11N13 (Secondary)}
\begin{document}

\begin{abstract}
  We estimate the $1$-level density of low-lying zeros of $L(s, \chi)$ with $\chi$ ranging over primitive Dirichlet characters of conductor $\in [Q/ 2, Q]$ and for test functions whose Fourier transform is supported in $(-2 - \frac{50}{1093}, 2 + \frac{50}{1093})$. Previously any extension of the support past the range $(-2, 2)$ was only known conditionally on deep conjectures about the distribution of primes in arithmetic progressions, beyond the reach of the Generalized Riemann Hypothesis (e.g Montgomery's conjecture). Our work provides the first example of a family of $L$-functions in which the support is unconditionally extended past the ``diagonal range'' that follows from a straightforward application of the underlying trace formula (in this case orthogonality of characters). We also highlight consequences for non-vanishing of $L(s, \chi)$. %

  \end{abstract}

\maketitle
\section{Introduction}

Motivated by the problem of establishing the non-existence of Siegel zeros (see \cite{ConreyIwaniec} for details), Montgomery \cite{Montgomery} investigated in 1972 the \textit{vertical} distribution of the zeros of the Riemann zeta-function. He showed that under the assumption of the Riemann Hypothesis, for any smooth function $f$ with $\text{supp } \hat{f} \subset (-1,1)$,
\begin{equation} \label{eq:montgomery}
\lim_{T \rightarrow \infty} \frac{1}{N(T)} \sum_{T \leq \gamma, \gamma' \leq 2T} f \Big ( \frac{\log T}{2\pi} \cdot (\gamma - \gamma') \Big ) = \int_{\mathbb{R}} f(u) \cdot \Big ( \delta(u) + 1 - \Big ( \frac{\sin 2\pi u}{2\pi u} \Big )^2 \Big ) du
\end{equation}
where $N(T)$ denotes the number of zeros of the Riemann zeta-function up to height $T$ and $\gamma, \gamma'$ are ordinates of the zeros of the Riemann zeta-function, and~$\delta(u)$ is a Dirac mass at~$0$.  Dyson famously observed that the right-hand side coincides with the pair correlation function of eigenvalues of a random Hermitian matrix.

Dyson's observation leads one to conjecture that the spacings between the zeros of the Riemann zeta-function are distributed in the same way as spacings between eigenvalues of a large random Hermitian matrix. Subsequent work of Rudnick-Sarnak \cite{RudnickSarnak} provided strong evidence towards this conjecture by computing (under increasingly restrictive conditions) the $n$-correlations of the zeros of any given automorphic $L$-function. Importantly the work of Rudnick-Sarnak suggested that the distribution of the zeros of an automorphic $L$-function is universal and independent of the distribution of its coefficients \cite{RudnickSarnak1996}. 

For number theoretic applications, the distribution of the so-called ``low-lying zeros'', that is zeros close to the central point is particularly interesting (see e.g \cite{HeathBrown, Young} for various applications; see also \cite{GS18} and \cite{Wat21}, for instance, for results in a different direction). Following the work of Katz-Sarnak \cite{KatzSarnak} and Iwaniec-Luo-Sarnak \cite{IwaniecLuoSarnak}, we believe that the distribution of these low-lying zeros is also universal and predicted by only a few random matrix ensembles (which are either symplectic, orthogonal or unitary). 

Specifically the work of Katz-Sarnak suggests that for any smooth function $\phi$ and any natural ``family'' of automorphic objects $\mathcal{F}$, 
\begin{equation} \label{eq:conjectured} 
\frac{1}{\# \mathcal{F}} \sum_{\pi \in \mathcal{F}} \sum_{\gamma_\pi} \phi \Big ( \frac{\log \mathfrak{c}_{\pi}}{2\pi} \cdot \gamma_\pi \Big ) \underset{\# \mathcal{F} \rightarrow \infty}{\longrightarrow} \int_{\mathbb{R}} \phi(x) K_{\mathcal{F}}(x) dx,
\end{equation}
where $\mathfrak{\gamma}_{\pi}$ are ordinates of the zeros of the L-function attached to $\pi$, $\mathfrak{c}_{\pi}$ is the analytic conductor of $\pi$ and $K_{\mathcal{F}}(x)$ is a function depending only on the ``symmetry type'' of $\mathcal{F}$. One may wish to consult \cite{IwaniecLuoSarnak} and \cite{SarnakShinTemplier2016} for a more detailed discussion.

There is a vast literature providing evidence for \eqref{eq:conjectured} (see \cite{survey}). Similarly to Montgomery's result \eqref{eq:montgomery} all of the results in the literature place a restriction on the support of the Fourier transform of $\phi$. This restriction arises from the limitations of the relevant trace formula (in some families it is not always readily apparent what this relevant trace formula is). 
In practice an application of the trace formula gives rise to so-called ``diagonal'' and ``off-diagonal'' terms. Trivially bounding the off-diagonal terms corresponds to what we call a ``straightforward'' application of the trace formula.

A central yet extremely difficult problem is to extend the support of $\hat{\phi}$ beyond what a ``straightforward'' application of the trace formula gives. In fact most works in which the support of $\hat{\phi}$ has been extended further rely on the assumption of various deep hypotheses about primes that sometimes lie beyond the reach of the Generalized Riemann Hypothesis (GRH). 

For example, Iwaniec-Luo-Sarnak show that in the case of holomorhic forms of even weight $\leq K$ one obtains unconditionally a result for $\hat{\phi}$ supported in $(-1,1)$ and that under the assumption of the Generalized Riemann Hypothesis this can be enlarged to $(-2, 2)$ (it is observed in \cite{DevinFiorilliSodergren} that assuming GRH only for Dirichlet $L$-functions is sufficient). Iwaniec-Luo-Sarnak also show that this range can be pushed further to $\text{supp } \hat{\phi} \subset (-22/9, 22/9)$ under the additional assumption that, for any $c \geq 1$, $(a,c) = 1$ and $\varepsilon > 0$, 
$$
\sum_{\substack{p \leq x \\ p \equiv a \pmod{c}}} e(2 \sqrt{p} / c) \ll_{\varepsilon} x^{1/2 + \varepsilon}.
$$
A similar behaviour is observed on low-lying zeros of dihedral~$L$-functions associated to an imaginary quadratic field~\cite{Fouvry2003}, where an extension of the support is shown to be equivalent to an asymptotic formula on primes with a certain splitting behaviour.

Assuming GRH, Brumer \cite{Brum92} studied the one-level density of the family of elliptic curves and proved a result for test functions supported in $(-5/9,5/9)$; this corresponds to the ``diagonal'' range for this family. Heath-Brown \cite{HeathBrown} improved this range to $(-2/3,2/3)$, and Young \cite{Young} pushed the support to $(-7/9,7/9)$. One-level density estimates for this family have deep implications for average ranks of elliptic curves. In particular, the work of Young was the first to show that, under some reasonable conjectures, a positive proportion of elliptic curves have rank~$0$ or~$1$ and thus satisfy the rank part of the Birch and Swinnerton-Dyer conjecture\footnote{A stronger conclusion was later reached unconditionally by Bhargava and Shankar~\cite{Bhargava2015} through other methods.}.

As another example, it follows for instance from minor modifications of \cite{HughesRudnick,ChandeeRadziwill} that in the family of primitive Dirichlet characters of modulus $\leq Q$ one can estimate 1-level densities unconditionally for $\phi$ with $\hat{\phi}$ supported in $(-2,2)$.\footnote{This is in fact the $GL(1)$ analogue of the result of Iwaniec-Luo-Sarnak for holomorphic forms.} As a by-product of work of Fiorilli-Miller~\cite[Theorem~2.8]{FiorilliMiller}, it follows that for any~$\delta\in(0, 2)$, this support can be enlarged to $(-2 -\delta, 2 + \delta)$ under the following ``de-averaging hypothesis''
\begin{equation} \label{eq:deavg}
  \sum_{Q /2 \leq q \leq Q} \Big | \sum_{\substack{p \leq x \\ p \equiv 1 \pmod{q}}} \log p - \frac{x}{\varphi(q)} \Big |^2 \ll Q^{-\delta/2} \sum_{Q /2 \leq q \leq Q} \sum_{(a,q) = 1} \Big | \sum_{\substack{p \leq x \\ p \equiv a \pmod{q}}} \log p - \frac{x}{\varphi(q)} \Big |^2.
\end{equation}

In this paper we give a first example of a family of $L$-functions in which we can \textit{unconditionally} enlarge the support past the ``diagonal'' range that follows from a straightforward application of the trace formula (in this case orthogonality of characters).

\begin{theorem} \label{th:main}
  Let $\Phi$ be a smooth function compactly supported in $[1/2, 3]$, and $\phi$ be a smooth function such that $\text{supp } \widehat{\phi} \subset ( -2 - \frac{50}{1093}, 2 + \frac{50}{1093})$. Then, as $Q \rightarrow \infty$, 
  \begin{equation}
    \label{eq:main}
    \sum_{q} \Phi \Big ( \frac{q}{Q} \Big ) \sum_{\substack{\chi\pmod{q} \\ \textup{primitive}}} \sum_{\gamma_\chi} \phi \Big ( \frac{\log Q}{2\pi} \gamma_{\chi} \Big ) = \widehat{\phi}(0) \sum_{q} \Phi \Big ( \frac{q}{Q} \Big ) \sum_{\substack{\chi \pmod{q} \\ \textup{primitive}}} 1 + o(Q^2).
  \end{equation}
  Here $\tfrac 12 + i \gamma_{\chi}$ correspond to non-trivial zeros of $L(s, \chi)$ and since we do not assume the Generalized Riemann Hypothesis we allow the $\gamma_{\chi}$ to be complex. 
\end{theorem}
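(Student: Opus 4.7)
The plan is to apply the explicit formula to each $L(s,\chi)$ for $\chi$ primitive modulo $q$, expressing $\sum_{\gamma_\chi} \phi(\tfrac{\log Q}{2\pi}\gamma_\chi)$ as an archimedean main contribution (of leading size $\widehat{\phi}(0)\,\tfrac{\log q}{\log Q}$ plus lower-order $\Gamma$-factor terms) minus the prime sum
\[
\frac{2}{\log Q}\, \Re \sum_{n \geq 2} \frac{\Lambda(n)\chi(n)}{\sqrt n}\, \widehat{\phi}\Big(\frac{\log n}{\log Q}\Big).
\]
After weighting by $\Phi(q/Q)$ and summing over primitive $\chi$, the archimedean piece yields $\widehat{\phi}(0)\, |\mathcal{F}| + o(Q^2)$ where $|\mathcal{F}| = \sum_q \Phi(q/Q)\varphi^*(q)$, so the theorem reduces to showing
\[
E := \sum_q \Phi\!\Big(\tfrac{q}{Q}\Big) \sum_{\chi \text{ prim mod }q} \sum_{n \geq 2} \frac{\Lambda(n)\chi(n)}{\sqrt n}\, \widehat{\phi}\!\Big(\tfrac{\log n}{\log Q}\Big) \;=\; o(Q^2 \log Q).
\]

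I would detect the primitivity by Möbius inversion and apply orthogonality of Dirichlet characters, swapping the roles of $n$ and $(q,\chi)$. This produces a Ramanujan-sum-like combinatorial cancellation that reduces the problem to controlling, in a weighted form, the fluctuations
\[
\psi(X; q, 1) - \frac{\psi(X)}{\varphi(q)}, \qquad X \asymp Q^{2+\delta}, \; \delta = \tfrac{50}{1093},
\]
averaged over $q \sim Q$. The prime-power contributions ($n = p^k$, $k \geq 2$) are handled by direct estimation. The range $p \leq Q^2$ (the classical support $[-2,2]$ regime) is controlled unconditionally by classical Bombieri--Vinogradov.

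The genuinely novel range is $Q^2 < p \leq Q^{2+\delta}$. For this I need an averaged equidistribution bound for primes in the single residue class $a=1$ to moduli $q \sim Q$ -- just below $\sqrt X$ -- with sufficient power-saving on average to give the required $o(Q^2 \log Q)$ gain. This is a quantitative form of the Fiorilli--Miller hypothesis~\eqref{eq:deavg} with $\delta = 50/1093$. The proof proceeds by the dispersion method: apply the Heath--Brown identity to decompose $\Lambda$ into type-I and type-II bilinear forms; bound the type-I pieces by Poisson summation and standard character-sum estimates; and in the type-II pieces, open the congruence $n \equiv 1 \pmod q$ via additive characters, reducing to weighted averages of Kloosterman sums, which are then controlled by the Deshouillers--Iwaniec spectral bounds based on the Kuznetsov trace formula.

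The main obstacle is the balanced regime of the type-II bilinear forms, where both variables cluster around $\sqrt X$ and the spectral input is closest to sharp. The Kim--Sarnak exponent $7/64$ for exceptional eigenvalues of the Laplacian on $\Gamma_0(q) \backslash \mathbb{H}$ enters directly, and the numerical value $\delta = 50/1093$ emerges from optimizing the type-I/type-II threshold in the Heath--Brown decomposition against these spectral bounds. Subsidiary technicalities include careful bookkeeping of the smooth cutoffs to maintain uniformity in $q$, and verifying that the non-dominant terms arising from the primitivity sieve (those with $d \mid q$, $d < q$) remain of lower order.
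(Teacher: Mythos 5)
Your outline matches the paper's broad strategy at the top level (explicit formula, detecting primitivity and applying orthogonality, Heath--Brown decomposition, dispersion, Deshouillers--Iwaniec spectral bounds with the Kim--Sarnak exponent, numerical optimization giving $50/1093$), but there is a genuine gap at the heart of the reduction: you subtract only the principal-character main term, i.e.\ you reduce to a power-saving bound, averaged over $q\sim Q$, for the classical discrepancy $\psi(X;q,1)-\psi(X)/\varphi(q)$ with $X$ up to $Q^{2+\kappa}$. That statement is not unconditionally provable by the dispersion method (or any known method): a single character of intermediate conductor $r$ (say $r\approx \exp(\sqrt{\log Q})$, induced to the $\approx Q/r$ moduli $q\sim Q$ with $r\mid q$) with a real zero close to $1$ would contribute on the order of $X^{\beta_1}/r$, and unconditionally (even with Siegel's theorem) this need not be $o(Q\sqrt{X})$ once $X\gg Q^{2}$. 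This is exactly why the required error term here --- negligible relative to $Q\sqrt{X}$, i.e.\ a \emph{power} saving below $X$ --- is much stronger than the $X(\log X)^{-A}$ savings of Bombieri--Vinogradov/BFI-type theorems, and why your reduction is essentially the Fiorilli--Miller-type hypothesis, on which the paper explicitly makes no progress. The paper's key device, which your proposal is missing, is to replace the congruence indicator by $\ud_R(n,w)=\1_{n\equiv 1\,(w)}-\frac1{\varphi(w)}\sum_{\cond(\chi)\leq R}\chi(n)$, i.e.\ to subtract \emph{all} characters of conductor up to a parameter $R$ (with $X^{1/2}Q^{-1}\leq R\leq Q^{2/3}X^{-2/9}$), not just the principal one. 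Because every modulus $q\asymp Q$ exceeds $R$, the subtracted low-conductor piece vanishes identically after the M\"obius/primitivity sieve, so it never has to be evaluated; and it is this modified approximant that makes both the large-sieve lemma and the main-term comparisons $\mathcal{M}_1$ vs.\ $\mathcal{M}_3$ in the dispersion close up without GRH or any zero-free-region input beyond the trivial.

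A second, related inaccuracy: the claim that the range $p\leq Q^2$ is ``controlled unconditionally by classical Bombieri--Vinogradov'' does not work. In that range the moduli $q\sim Q$ sit at or slightly beyond $X^{1/2}$ (the paper must treat $Q^2/(\log Q)^{A}\ll X\leq Q^{2}$, where $Q\geq \sqrt{X}$ up to logarithms), which is outside the Bombieri--Vinogradov range; the paper needs its Proposition~\ref{prop:primes-ap} (dispersion with the $\ud_R$ discrepancy) already for $X\asymp Q^2$, reserving trivial bounds only for $X\ll Q^2/(\log Q)^6$ and the large-sieve lemma only for portions of the sum carrying an extra $Q^{-\eps}$ factor. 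Finally, a minor point: the critical type-II regime is not the balanced one near $\sqrt{X}$; the combinatorial lemma arranges the type-II variable to satisfy $X^{\sigma}\ll N\ll X^{1/3-\delta}$, and the binding constraints come from the $X^{1/3}$ boundary together with the type $d_1$/$d_2$ smooth-variable cases.
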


\begin{remark*}
In stating the theorem we have, for technical simplicity, made a suitable approximation to the conductor $\mathfrak{c}_\pi$ appearing in \eqref{eq:conjectured}.
\end{remark*}

Note that~$\phi$, initially defined on~$\R$, is analytically continued to~$\C$ by compactness of~$\supp\hat\phi$. Our arguments can be adapted to show that if~$\text{supp } \widehat{\phi} \subset ( -2 - \frac{50}{1093}+\eps, 2 + \frac{50}{1093}-\eps)$ for some~$\eps>0$, then the error term in~\eqref{eq:main} is~$O(Q^{2-\delta})$ with~$\delta = \delta(\eps)$, up to altering slightly the main terms: after applying the explicit formula as in section \ref{section:explicit formula}, include the terms of order $\asymp Q^2/\log Q$ into the main term instead of treating them as error terms.

We remark that we make no progress on the ``de-averaging hypothesis''~\eqref{eq:deavg} of Fiorilli-Miller, which remains a difficult open problem. We estimate the original sum over primes in arithmetic progressions, on average over moduli, by a variant of an argument of Fouvry~\cite{Fouvry1985} and Bombieri-Friedlander-Iwaniec~\cite{BombieriFriedlanderEtAl1986} which is based on Linnik's dispersion method. The GRH will be dispensed with by working throughout, as in~\cite{Drappeau2015}, with characters of large conductors.

The asymptotic formula~\eqref{eq:main} is expected to hold true without the extra averaging over~$q$. This extra averaging over~$q$, and the cancellation of arguments which comes along, play an important role in our arguments.

If the GRH is true for Dirichlet~$L$-functions, then let any $0 < \kappa < \frac{50}{1093}$ be fixed, and let~$\lambda>1$ be small enough that~$\kappa' := 2(\lambda-1)+\lambda\kappa \in (0, \frac{50}{1093})$ as well. Defining
$$
{\tilde \phi}(x) = \lambda \Big ( \frac{\sin \pi (2 + \kappa)x}{\pi (2 + \kappa)x} \Big )^2, \qquad \phi = {\tilde \phi} \ast u
$$
where~$u$ is a smooth, positive approximation of unity such that~$\phi(0) \geq \lambda^{-1} \tilde\phi(0) = 1$, and using the inequality
$$
1 - \sum_{\gamma_{\chi}} \phi \Big ( \frac{\log Q}{2\pi} \gamma_{\chi} \Big ) \leq \mathbf{1} \Big ( L(\tfrac 12, \chi) \neq 0 \Big ) ,
$$
we deduce from Theorem \ref{th:main} that the proportion of non-vanishing $L(\tfrac 12, \chi)$ with $\chi$ ranging over primitive characters of conductor in $[Q / 2, Q]$ is at least $1 - \lambda (2 + \kappa')^{-1} = 1 - (2+\kappa)^{-1}$ for any $\kappa < \tfrac{50}{1093}$. We record this consequence in the Corollary below. 
\begin{corollary}\label{cor:nonvan} Let $\varepsilon \in (0, 10^{-7})$. 
  Assume the Generalized Riemann Hypothesis for Dirichlet $L$-functions. Then for all $Q$ large enough, the proportion of primitive characters $\chi$ with modulus $\in [Q/ 2, Q]$ for which
  $$
  L(\tfrac 12, \chi) \neq 0
  $$
  is at least
  $$
  \frac{1}{2} + \frac{25}{2236} - \varepsilon > 0.51118.
  $$
\end{corollary}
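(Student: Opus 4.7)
The plan is a standard Fejér kernel density-to-nonvanishing argument, with Theorem~\ref{th:main} serving as a black box. For any fixed $\kappa \in (0, 50/1093)$ I would take
$$
\phi_\kappa(x) = \Big(\frac{\sin \pi(2+\kappa)x}{\pi(2+\kappa)x}\Big)^2.
$$
This $\phi_\kappa$ is a non-negative Schwartz function with $\phi_\kappa(0)=1$; its Fourier transform is the triangle function supported in $[-(2+\kappa), 2+\kappa]\subset[-2-\tfrac{50}{1093}, 2+\tfrac{50}{1093}]$, and $\widehat{\phi_\kappa}(0) = \int_\R\phi_\kappa = 1/(2+\kappa)$. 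In particular $\phi_\kappa$ satisfies the hypotheses of Theorem~\ref{th:main}. Under GRH all $\gamma_\chi$ are real, so $\phi_\kappa(\tfrac{\log Q}{2\pi}\gamma_\chi)\geq 0$; and if $L(\tfrac12,\chi)=0$ then a zero at $\gamma_\chi=0$ contributes $\phi_\kappa(0)=1$ to the sum over zeros. These two observations together give the pointwise inequality quoted in the excerpt.

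Next I would sum that inequality over primitive $\chi\pmod q$ weighted by a non-negative smooth $\Phi\not\equiv 0$ supported in $[3/4, 9/4]$. Setting $\Cst(Q) := \sum_q\Phi(q/Q)\#\{\chi\pmod q \text{ primitive}\}$, standard sieves give $\Cst(Q)\sim c_\Phi Q^2$ with $c_\Phi>0$. By Theorem~\ref{th:main},
$$
\sum_q \Phi(q/Q)\sum_{\substack{\chi\pmod q \\ \text{primitive}}} \sum_{\gamma_\chi} \phi_\kappa\Big(\tfrac{\log Q}{2\pi}\gamma_\chi\Big) = \tfrac{1}{2+\kappa}\Cst(Q)+o(Q^2),
$$
so, combining with the pointwise inequality,
$$
\sum_q \Phi(q/Q)\#\{\chi\pmod q \text{ primitive}: L(\tfrac12,\chi)\neq 0\} \geq \Big(1-\tfrac{1}{2+\kappa}\Big)\Cst(Q)+o(Q^2).
$$
Dividing by $\Cst(Q)$ and letting $\kappa\uparrow 50/1093$ yields an asymptotic lower proportion of $1-1093/2236=1143/2236>0.51118$, matching the numerical bound.

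The remaining technical point is to translate this smoothed statement into a sharp count over moduli $q\in[Q/2,Q]$; this is a routine partition-of-unity/sandwiching argument, applying Theorem~\ref{th:main} to finitely many rescaled $\Phi$'s that cover $[Q/2, Q]$, with the small loss, together with the strictness gap coming from $\kappa<50/1093$, both absorbed into the free parameter $\varepsilon$. There is no genuine obstacle here: the entire content of the corollary lies in Theorem~\ref{th:main}, and the argument above is essentially forced by the choice of admissible test function.
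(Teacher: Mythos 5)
Your argument is exactly the paper's: it proves the corollary by choosing the same Fej\'er test function $\phi_\kappa(x)=\bigl(\tfrac{\sin\pi(2+\kappa)x}{\pi(2+\kappa)x}\bigr)^2$, the same pointwise inequality $1-\sum_{\gamma_\chi}\phi_\kappa\bigl(\tfrac{\log Q}{2\pi}\gamma_\chi\bigr)\le \mathbf{1}\bigl(L(\tfrac12,\chi)\neq 0\bigr)$ under GRH, and Theorem~\ref{th:main}, yielding the proportion $1-(2+\kappa)^{-1}$ for any $\kappa<\tfrac{50}{1093}$; your smoothing/sandwiching step over the moduli range is just the routine detail the paper leaves implicit. (One minor numerical remark: your limiting value is $1-\tfrac{1093}{2236}=\tfrac12+\tfrac{25}{2236}>0.51118$, and the corollary's displayed constant $\tfrac12+\tfrac{25}{2235}$ appears to be a slight misprint for this.)
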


Corollary~\ref{cor:nonvan} is related to a recent result of Pratt \cite{Pratt2018} who showed unconditionally that the proportion of non-vanishing in this family is at least $0.50073$. We note that both the arguments of~\cite{Pratt2018} and those presented here eventually rely on bounds of Deshouillers-Iwaniec~\cite{DeshouillersIwaniec1982a} on cancellation in sums of Kloosterman sums.

\subsection*{Notations}

We call a map~$f:\R_+\to\C$ a \emph{test function} if~$f$ is smooth and supported inside~$[\frac12, 3]$.

For~$w\in\N$, $n\in\Z$ and~$R\geq 1$, we let
$$ \ud_R(n, w) := \1_{n\equiv 1 \mod{w}} -  \frac1{\vphi(w)}\ssum{\chi\mod{w} \\ \cond(\chi)\leq R} \chi(n). $$
Note the trivial bound
\begin{equation}
\abs{\ud_R(n, w)} \ll \1_{n\equiv 1\mod{w}} + \frac{R\tau(w)}{\vphi(w)}.\label{eq:ud-triv}
\end{equation}

The symbol~$n\sim N$ in a summation means~$n\in[N, 2N)\cap\Z$. We say that a sequence~$(\alpha_n)_n$ is \emph{supported at scale~$N$} if~$\alpha_n=0$ unless~$n\sim N$.

The letter~$\eps$ will denote an arbitrarilly small number, whose value may differ at each occurrence. The implied constants will be allowed to depend on~$\eps$.

\subsection*{Acknowledgments}
Part of this work was conducted while the second author was supported by the National Science Foundation Graduate Research Program under grant number DGE-1144245. The third author acknowledges the support of a Sloan fellowship and NSF grant DMS-1902063.
The authors thank the anonymous referee for helpful remarks, and Jared Lichtman for helpful discussions on Proposition 6.

\section{Proof of Theorem~\ref{th:main}}

\subsection{Lemmas on primes in arithmetic progressions}

We will require two results about primes in arithmetic progressions. The first is a standard estimate, obtained from an application of the large sieve. 
\begin{lemma}
  Let~$A>0$, $X, Q, R \geq 2$ satisfy~$1 \leq R \leq Q$ and~$X \geq Q^2/(\log Q)^A$, and~$f$ be a test function with~$\|f^{(j)}\|_\infty \ll_j 1$. Then
  \begin{equation}
    \label{eq:bound-delta-LS}
    \sum_{q\leq Q} \abs{\sum_{n\in \N} f\Big(\frac nX\Big) \Lambda(n) \ud_R(n, q)} \ll Q (\log Q)^{O(1)} \sqrt{X}\Big(1 + \frac{\sqrt{X}}{RQ} + \frac{X^{3/8}}Q\Big).    
  \end{equation}
  The implied constant depends at most on~$A$ and the implied constants in the hypothesis.
\end{lemma}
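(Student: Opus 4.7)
My approach starts by expanding $\ud_R$ via orthogonality of characters and reducing to a sum over primitive characters of conductor~$>R$, before applying the multiplicative large sieve inequality. Using $\1_{n\equiv 1\mod q}=\varphi(q)^{-1}\sum_{\chi\mod q}\chi(n)$ for $(n,q)=1$ (both sides vanishing otherwise), I write
\[
\ud_R(n, q) = \frac{1}{\varphi(q)} \sum_{\substack{\chi \mod q \\ \cond(\chi) > R}} \chi(n),
\]
and after pulling absolute values inside and regrouping by the primitive character $\chi^*$ of conductor $q^* > R$ inducing $\chi$ (writing $q=dq^*$, $d \leq Q/q^*$), using the elementary estimate $\sum_{d\leq Q/q^*}\varphi(dq^*)^{-1}\ll(\log Q)\varphi(q^*)^{-1}$ and a negligible discrepancy $O((\log X)^2)$ between $T(\chi):=\sum_n f(n/X)\Lambda(n)\chi(n)$ and its primitive counterpart $T(\chi^*)$ (from prime powers $n=p^k$ with $p\mid d$), the left-hand side of \eqref{eq:bound-delta-LS} is bounded by
\[
(\log Q)^{O(1)} \sum_{R < q^* \leq Q} \frac{1}{\varphi(q^*)} \sum_{\substack{\chi^* \textup{ prim.} \\ \mod q^*}} \abs{T(\chi^*)} + O\bigl(Q(\log Q)^{O(1)}\bigr).
\]

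I then dyadically decompose $q^*\sim Q_0$, apply Cauchy--Schwarz against the $\asymp Q_0^2$ primitive characters, and invoke the multiplicative large sieve inequality
\[
\sum_{q^* \leq Q_0} \sum_{\chi^* \textup{ prim.}} \abs{T(\chi^*)}^2 \ll (X + Q_0^2)\, \|\Lambda \, f(\cdot/X)\|_2^2 \ll (X+Q_0^2)\, X(\log X)^{O(1)}.
\]
For $Q_0 \in [X^{1/2},Q]$ this gives a per-scale contribution $\ll Q_0\sqrt{X}(\log X)^{O(1)}$, summing dyadically to $Q\sqrt{X}(\log X)^{O(1)}$, which produces the main term. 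For the delicate range $Q_0 \in [R, X^{1/2}]$, the naive large sieve yields only $X(\log X)^{O(1)}$ per scale, which is too lossy; to obtain the stated improvements $X/R$ and $X^{7/8}$, one exploits the weight $\varphi(q^*)^{-1}\ll (q^*)^{-1}\leq 1/R$ (trading volume for reach and producing the $X/R$ gain) and combines this with a Heath--Brown-type decomposition of $\Lambda$, whose Type~I pieces are completed via Poisson summation modulo $q^*$ and estimated by the Weil bound on the resulting Kloosterman sums, producing the residual exponent $3/8$.

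The principal obstacle is the transitional regime $q^*\approx X^{1/2}$: there, neither the pure large sieve nor the Weil-based pointwise bound is individually optimal, and one must balance the Type~I/Type~II thresholds carefully so that the two estimates glue continuously at the boundary. Granting these bounds, collecting the contributions $Q\sqrt{X}$, $X/R$, and $X^{7/8}$ across the dyadic scales~$Q_0$ yields~\eqref{eq:bound-delta-LS}.
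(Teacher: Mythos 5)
Your opening reduction (writing $\ud_R(n,q)=\varphi(q)^{-1}\sum_{\cond(\chi)>R}\chi(n)$, regrouping by the inducing primitive character with the estimate $\sum_{d\le Q/q^*}\varphi(dq^*)^{-1}\ll(\log Q)\varphi(q^*)^{-1}$, and handling conductors $q^*\gg\sqrt X$ by Cauchy--Schwarz plus the multiplicative large sieve) is sound, and when $X\asymp Q^2$ up to powers of $\log Q$ even your ``naive'' per-scale bound $X(\log X)^{O(1)}$ would suffice. The genuine gap is in the range $R\le q^*\le\sqrt X$ when $X$ is substantially larger than $Q^2$ (exactly the case the extra terms $X/R$ and $X^{7/8}$ are there for, since then $Q\sqrt X$ is smaller than $X$): the two mechanisms you invoke there do not produce those terms. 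Bounding $\varphi(q^*)^{-1}$ by $1/R$ is \emph{weaker} than the bound $\asymp 1/q^*$ you have already used; the loss in your argument is not in the weight but in applying Cauchy--Schwarz over the $\asymp Q_0^2$ primitive characters of conductor $\sim Q_0$, and no manipulation of the weight alone recovers the missing factor of $Q_0$. Likewise, in your framework a Type~I piece, after completing the long smooth character sum modulo $q^*$, produces Gauss sums --- i.e.\ P\'olya--Vinogradov, giving $\ll M\sqrt{q^*}(\log q^*)$ per character --- not Kloosterman sums, so there is no Weil-bound input available here, and in any case the exponent $3/8$ does not originate from Type~I sums.

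What is actually needed, and what the paper does, is a bilinear treatment of the Type~II ranges: decompose $\Lambda$ combinatorially into Type~I sums with $M\ll X^{1/4}$ and Type~II sums with $X^{1/4}\ll M\ll X^{3/4}$, and for the latter use the factorization $T(\chi)=A(\chi)B(\chi)$, applying Cauchy--Schwarz and the large sieve to each factor at its own length; this is the conductor-restricted bilinear large sieve \cite[Theorem~17.4]{IwaniecKowalski2004}, which the paper quotes directly. Per conductor scale $Q_0$ this yields roughly $Q_0^{-1}(M+Q_0^2)^{1/2}(N+Q_0^2)^{1/2}\|a\|\|b\|$ with $N=X/M$, which is $\ll X/Q_0\le X/R$ when $Q_0\le\min(\sqrt M,\sqrt N)$ and $\ll \sqrt X(\sqrt M+\sqrt{X/M})\ll X^{7/8}$ in the intermediate range: both extra terms in \eqref{eq:bound-delta-LS} come from this bilinear step, the exponent $3/8$ arising from the Type~I/Type~II threshold $M\asymp X^{1/4}$ or $X^{3/4}$, not from any exponential-sum estimate. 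The Type~I pieces are then harmless (P\'olya--Vinogradov gives $\ll MR^{3/2}(\log Q)^{O(1)}$ in the paper's arrangement, and $\ll MQ^{3/2}\ll Q\sqrt X(\log Q)^{O(1)}$ in yours). Without the bilinear step your argument cannot beat $X(\log X)^{O(1)}$ per scale, which exceeds the claimed bound once $X\gg Q^2(\log Q)^{C}$; and the ``transitional regime $q^*\approx X^{1/2}$'' you single out as the principal obstacle presents no difficulty once the bilinear large sieve is in place.
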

\begin{proof}
  By Heath-Brown's combinatorial formula for primes~\cite[Proposition~13.3]{IwaniecKowalski2004} (with~$K=2$), we restrict to proving the bound with~$\Lambda(n)$ replaced by convolutions of type~I and~II, of the shape
  \begin{align*}
    \summ{2}{n = m \ell \\ m\sim M} \alpha_m & \qquad (M \ll X^{1/4}), \\
    \summ{2}{n = m \ell \\ m\sim M} \alpha_m \beta_{\ell} & \qquad (X^{1/4} \ll M \ll X^{3/4}),
  \end{align*}
  where~$\abs{\alpha_m} \ll (\log X)\tau_4(m)$ and the analogous bound holds for~$\beta_\ell$; here we noted that if~$m_1\leq m_2\leq \sqrt{X}$ and~$m_1m_2 > X^{1/4}$, then either~$X^{1/4} < m_1m_2 \leq X^{3/4}$ or~$X^{1/4} \leq m_1 \ll X^{1/2}$. We treat the type~I case by the Poly\'{a}-Vinogradov inequality~\cite[Theorem~12.5]{IwaniecKowalski2004}, getting a bound~$O(M R^{3/2}(\log Q)^{O(1)})$. We treat the type~II case by the large sieve~\cite[Theorem~17.4]{IwaniecKowalski2004}, getting a contribution~$O(\sqrt{X}(\log Q)^{O(1)}(Q + \sqrt{M} + \sqrt{X/M} + \sqrt{X}R^{-1}))$.
\end{proof}

The second estimate is substantially deeper and we defer its proof to Section \ref{se:prop}.

\begin{proposition}\label{prop:primes-ap}
  Let~$\kappa \in (0, \frac{50}{1093})$ and~$\eps>0$. Let~$\Psi$ and~$f$ be test functions, $A>0$, $X, Q, W, R\geq 1$, and~$b\in\N$. Assume that
  \begin{equation*}
    \begin{aligned}
      \frac{Q^2}{(\log Q)^A} \ll X \ll Q^{2+\kappa}, \quad & \quad X^{11/20} Q^{-1} \leq R\leq Q^{2/3} X^{-2/9}, \\
      b \leq Q^\eps \quad & \quad Q^{1-\eps} \ll W \ll Q,
    \end{aligned}
  \end{equation*}
  and that~$\|f^{(j)}\|_\infty, \|\Psi^{(j)}\|_\infty \ll_j 1$. Then, if~$\eps>0$ is small enough in terms of~$\kappa$, we have
  $$ \ssum{w\in\N} \Psi\Big(\frac{w}{W}\Big) \ssum{n\in\N} \Lambda(n) f\Big(\frac{n}{X}\Big) \ud_R(n, bw) \ll Q^{1-\eps}\sqrt{X}. $$
  The implied constant depends at most on~$\kappa$, $A$, and the implied constants in the hypotheses.
\end{proposition}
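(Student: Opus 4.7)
The plan is to prove Proposition~\ref{prop:primes-ap} through a combinatorial decomposition of~$\Lambda$ followed by Linnik's dispersion method, in the spirit of Fouvry~\cite{Fouvry1985} and Bombieri-Friedlander-Iwaniec~\cite{BombieriFriedlanderEtAl1986}, adapted as in~\cite{Drappeau2015} so as to dispense with GRH by restricting to characters of large conductor via~$\ud_R$. First I would apply Heath-Brown's identity to~$\Lambda(n) f(n/X)$, reducing the estimate to a logarithmic number of bilinear forms of the shape
$$ \ssum{w\sim W} \Psi\Big(\frac{w}{W}\Big) \ssum{m\sim M,\, \ell\sim L \\ m\ell \sim X} \alpha_m \beta_\ell \, \ud_R(m\ell, bw), $$
with~$ML\asymp X$ and~$(\alpha_m), (\beta_\ell)$ either bounded arbitrary or smooth sequences at the indicated scales.

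For the \emph{Type I} ranges (one variable smooth and of small scale), I would apply Poisson summation in the smooth variable modulo~$bw$: the zero frequency reproduces the principal term and is cancelled by~$\ud_R$, while the non-zero frequencies yield incomplete character sums, controllable on average over~$w$ and the dual variable by Polya-Vinogradov combined with the large sieve. For the \emph{Type II} ranges, in which both~$M$ and~$L$ lie in a critical middle window, I would carry out the dispersion argument: apply Cauchy-Schwarz in~$(w, m)$ to eliminate one coefficient sequence, open the resulting square using a pair~$\ell_1, \ell_2$, and evaluate the innermost sum by Poisson summation modulo~$bw$. The diagonal~$\ell_1 = \ell_2$ reproduces a main term again cancelled by~$\ud_R$, whereas the off-diagonal contribution, after standard manipulations, takes the shape of an average of Kloosterman sums whose moduli are divisible by~$bw$. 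These I would bound via the spectral estimates of Deshouillers-Iwaniec~\cite{DeshouillersIwaniec1982a}; the numerical threshold~$\kappa<\frac{50}{1093}$ will emerge from optimizing the exponents in those estimates against the sizes of the dual variables produced by Poisson summation.

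The constraints on~$R$ play complementary roles: the lower bound~$R\geq X^{1/2}/Q$ ensures that the principal terms removed at both the Type~I zero frequency and the Type~II diagonal step fall within the range of characters subtracted by~$\ud_R$, avoiding any reliance on GRH; the upper bound~$R\leq Q^{2/3}X^{-2/9}$ keeps the non-zero-frequency Type~I contribution below the target bound~$Q^{1-\eps}\sqrt X$. The extra averaging over~$w\sim W\asymp Q$ is essential: it supplies the second modulus needed for the dispersion mechanism to succeed, and provides the Kloosterman moduli at which Kuznetsov-type bounds yield a genuine power saving. The main obstacle will be the Type~II case, where every application of Cauchy-Schwarz and Poisson must be essentially sharp, and the combinatorial bookkeeping arising from Heath-Brown's identity must arrange that every bilinear range is covered either by the Type~I Poisson argument or by a Type~II dispersion argument whose parameters fall within the narrow window of applicability of the Deshouillers-Iwaniec estimates.
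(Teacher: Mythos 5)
Your overall strategy (Heath--Brown decomposition, Poisson plus reciprocity for the smooth ranges, dispersion plus Deshouillers--Iwaniec for the bilinear ranges, with $R$ calibrated so that the subtracted characters absorb the main terms) is the right family of ideas and matches the paper's general framework, but as described it has three concrete gaps. First, a two-way split into Type~I and Type~II ranges does not cover the identity: at the strength required here (error $Q^{1-\eps}\sqrt X$ with $X$ up to $Q^{2+\kappa}$), the smooth (Type~$d_1$) argument only works when the smooth variable exceeds $X^{1/3+\lambda}$ and the dispersion only works when some rough subproduct lies below $X^{1/3-\delta}$, with $\lambda,\delta$ bounded away from $0$ in terms of $\varpi$ (where $X=Q^{2+\varpi}$); products of three nearly equal factors of size $\approx X^{1/3}$ therefore fall through both cases. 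The paper needs a separate trilinear ``Type~$d_2$'' estimate (two smooth variables near $X^{1/3}$, treated as in Section~14 of Bombieri--Friedlander--Iwaniec via Theorem~12 of Deshouillers--Iwaniec), together with a combinatorial lemma guaranteeing that every factorization is of type $d_1$, $d_2$ or II; without this your case analysis is not exhaustive. Second, your Type~I treatment cannot close: after Poisson in the smooth variable the nonzero frequencies are exponentials $\e(h\bar m/(bw))$ with modular inverses, not character sums, and bounding them ``by P\'olya--Vinogradov combined with the large sieve'' is precisely the estimate that is always too large in this problem (the large sieve gives $\gg Q\sqrt X$ with no saving to spare, which is why the support was previously stuck at $[-2,2]$). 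The actual argument applies reciprocity $\bar m h/(bw)\equiv -\overline{bw}h/m \pmod 1$ and then extracts cancellation from the sum over the \emph{smooth modulus} $w$ via Weil-type bounds for incomplete Kloosterman sums; this exploitation of the $w$-average is the essential new input and is absent from your plan.

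Third, your dispersion is arranged the wrong way. Cauchy--Schwarz in $(w,m)$ with the pair $\ell_1,\ell_2$ inside fails already at the diagonal: the diagonal of $\sum_{w,m}\bigl|\sum_\ell\beta_\ell\,\ud_R(m\ell,bw)\bigr|^2$ has size about $X$, so the method cannot do better than $X\sqrt{W/L}$, which exceeds $Q\sqrt X$ whenever $L\ll X^{1/3}$ (and if instead $\ell$ is the long variable, the remaining free variable $m$ is shorter than the modulus $bw$, so the main terms that must cancel against the conductor-$\leq R$ characters cannot be extracted). The correct arrangement, used in the paper, is Cauchy--Schwarz in the long variable $m$ alone, keeping \emph{both} the modulus $w$ and the short variable $n$ inside the square; opening it produces pairs $(w_1,n_1),(w_2,n_2)$, one factors out $q_0=(w_1,w_2)$, performs Poisson in $m$ modulo $bq_0q_1q_2$, and only then do the resulting sums of Kloosterman-type fractions have the five-variable shape to which the Deshouillers--Iwaniec spectral bound applies. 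Relatedly, quoting Deshouillers--Iwaniec as a black box does not yield the stated constant: the exponent $\tfrac{50}{1093}$ encodes the Kim--Sarnak bound $\theta\leq 7/64$ (note $1-2\theta=50/64$), so one must rework the Deshouillers--Iwaniec/dispersion estimate to carry the spectral-gap parameter $\theta$ through (and incorporate the known correction to their Theorem~12), as the paper does in its Proposition on exponential sums.
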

\begin{proof} See Section \ref{se:prop}. \end{proof}

\subsection{Explicit formula}\label{section:explicit formula}

We let~$\kappa\in(0, \frac{50}{1093})$ be such that~$\supp\hat\phi \subset (-2-\kappa, 2+\kappa)$.

We rewrite the left-hand side of~\eqref{eq:main} by applying the explicit formula, \emph{e.g.}~\cite[Theorem~2.2]{Sica1998}, where the quantity $\Phi(\rho)$ there (not to be confused with our test function) is replaced by~$\phi(\frac{\rho-1/2}{2\pi i}\log Q)$, so that~$F(x) = \frac1{\log Q}{\hat \phi}(\frac x{\log Q})$. For~$q>1$ and~$\chi\mod{q}$ primitive, we obtain
\begin{equation}\label{eq:zerosum-grh}  
  \begin{aligned}
    \ssum{\rho\in\C \\ \Re(\rho)\in (0, 1) \\ L(\rho, \chi) = 0} \phi&\Big(\frac{(\rho-\frac12)\log Q}{2\pi i}\Big) \\
    = {}& O\Big(\frac1{\log Q}\Big) + {\hat \phi}(0)\frac{\log q}{\log Q} - \frac1{\log Q} \sum_{n\geq 1} (\chi(n) + \bar{\chi}(n)) \frac{\Lambda(n)}{\sqrt{n}} {\hat \phi}\Big(\frac{\log n}{\log Q}\Big),  
  \end{aligned}
\end{equation}
since the terms~$I, J$ appearing in~\cite[Theorem~2.2]{Sica1998} satisfy~$\abs{I(\frac12, b)} + \abs{J(\frac12, b)} \ll (\log Q)^{-1}$ for~$b\in\{0, \frac12\}$ by reasoning similarly as in~\cite[Lemma~3.1]{Sica1998}. Let $\Psi (x) = \Phi(x) x$. Summing \eqref{eq:zerosum-grh} over $\chi$ and $q$ we see that to conclude it remains to show that
\begin{equation}
  \label{eq:def-Skappa}
   S_\phi(Q) := \sum_{q\in\N} \frac1q \Psi\Big(\frac qQ\Big) \sum_{\substack{\chi (q) \\\text{primitive}}} \frac1{\log Q} \sum_{n\geq 1} (\chi(n) + \bar{\chi}(n)) \frac{\Lambda(n)}{\sqrt{n}} {\hat \phi}\Big(\frac{\log n}{\log Q}\Big) = o(Q).
\end{equation}
We will in fact obtain the following slightly stronger result. 
\begin{proposition}\label{prop:bound-sumprimes}
Let $\kappa\in(0,\frac{50}{1093})$. For all~$Q$ large enough and~$\eps>0$ small enough in terms of~$\kappa$, we have
$$ S_\phi(Q) = O\Big(\frac{Q}{\log Q}\Big). $$
The implied constant depends on~$\phi$ and~$\eps$ at most.
\end{proposition}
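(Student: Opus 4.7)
The plan is to reduce $S_\kappa(Q)$ to sums controlled by the preceding large-sieve lemma and Proposition~\ref{prop:primes-ap} via a M\"obius expansion of the primitive character sum. By orthogonality together with conductor decomposition, for $(n,q)=1$,
$$\sum_{\chi\text{ prim}\mod{q}}\big(\chi(n) + \bar\chi(n)\big) = 2\sum_{d\mid q}\mu(q/d)\vphi(d)\1_{n\equiv 1\mod{d}},$$
where the factor $2$ appears because primitive characters are closed under conjugation. Writing $q = dw$, interchanging sums, and discarding the contribution of $(n,dw)>1$ (which is concentrated on the few prime powers whose base prime divides $dw$), one obtains
$$S_\kappa(Q) = \frac{2}{\log Q}\sum_{d,w}\mu(w)\frac{\vphi(d)}{dw}\Psi\Big(\frac{dw}{Q}\Big)\sum_n\1_{n\equiv 1\mod{d}}\Lambda(n)\frac{\hat\phi(\log n/\log Q)}{\sqrt n} + O\Big(\frac{Q}{\log Q}\Big).$$

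Next I insert the identity $\1_{n\equiv 1\mod{d}} = \ud_R(n,d) + \vphi(d)^{-1}\sum_{\chi\mod{d},\ \cond\chi\leq R}\chi(n)$ with $R = Q^\eta$ for a small fixed $\eta>0$, splitting $S_\kappa = T_R + E_R$. The small-conductor part $E_R$ is handled classically: regrouping by the primitive character $\chi^*$ of conductor $q^*\leq R$ and using the M\"obius identity $\sum_{w\mid k}\mu(w) = \1_{k=1}$, the outer weight $\sum_{d,w:\,q^*\mid d}\mu(w)/(dw)\Psi(dw/Q)$ collapses to $\Psi(q^*/Q)/q^*$, which vanishes since $q^*\leq Q^\eta$ lies outside the support of $\Psi(\cdot/Q)$. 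The surviving coprimality-correction terms (produced by the factor $\1_{(n,d)=1}$ implicit in $\chi(n)$) are bounded using standard estimates on $L(s,\chi^*)$ in the critical strip combined with the classical zero-free region; a potential Siegel zero attached to some $\chi^*$ of conductor at most $R$ contributes negligibly once $\eta$ is small enough.

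The main work is the bound $T_R \ll Q/\log Q$. I dyadically decompose the $n$-sum at scales $X\in[1,Q^{2+\kappa}]$ and split the $w$-sum at $w = Q^\eps$. When $w\leq Q^\eps$, the modulus $d\sim Q/w$ lies in $[Q^{1-\eps},Q]$; treating $w$ as a fixed parameter and applying Proposition~\ref{prop:primes-ap} to the sum over $d$ (with $b=1$, and the factor $\vphi(d)/d$ absorbed via a further M\"obius decomposition into smooth weights) yields the signed bound $Q^{1-\eps}\sqrt X$, which after weighting by $1/(w\log Q\sqrt X)$ and summing over $w\leq Q^\eps$ and dyadic $X$ is $O(Q^{1-\eps/2})$. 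When $w > Q^\eps$, Proposition~\ref{prop:primes-ap} no longer applies directly, but here $d\leq Q^{1-\eps}$ is sufficiently small that M\"obius cancellation in $w$ can be extracted: a Mellin--Perron contour shift on $\sum_w\mu(w)w^{-1}\Psi(dw/Q)$ across $\Re s = 0$, using $1/\zeta(1)=0$ together with the classical zero-free region of $\zeta$, yields a factor $(d/Q)^{c\eps}$ of decay. Combined with the preceding large-sieve lemma (applied with modulus bound $Q^{1-\eps}$), or with trivial divisor bounds when $X$ is small, this closes the estimate in this regime.

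The principal obstacle is Proposition~\ref{prop:primes-ap} itself, whose proof is deferred to Section~\ref{se:prop} and rests on the dispersion method of Linnik as refined by Fouvry and Bombieri--Friedlander--Iwaniec, together with the Deshouillers--Iwaniec bounds on sums of Kloosterman sums. Beyond that, the most delicate point in the present deduction is the coordination of the parameters $\eta$ and $\eps$ with the dyadic scales $X$: they must simultaneously fit the hypotheses of Proposition~\ref{prop:primes-ap} (in particular $R\leq Q^{2/3}X^{-2/9}$ and $X\leq Q^{2+\kappa}$), extract enough M\"obius cancellation in $w$ to cover the small-modulus regime, and absorb the small-conductor errors. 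These competing constraints are what ultimately constrain the admissible range of $\kappa$.
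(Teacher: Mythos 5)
Your overall architecture is essentially the paper's: orthogonality for primitive characters, exact M\"obius cancellation of the low-conductor contribution (your collapse of the weight to $\Psi(q^*/Q)/q^*=0$ is the same mechanism as the paper's observation that $\sum_{v\mid q/r}\mu(v)=0$ because $r\leq R<q$ on the support of $\Psi$), dyadic decomposition in $n$, Proposition~\ref{prop:primes-ap} when the congruence modulus is $\gg Q^{1-\eps}$, and the large-sieve lemma when it is smaller. However, there is a genuine gap in your large-modulus regime: you cannot take ``$b=1$'' and claim that the factor $\vphi(d)/d$ is ``absorbed via a further M\"obius decomposition into smooth weights''. Writing $\vphi(d)/d=\sum_{b\mid d}\mu(b)/b$ does not smooth anything in the modulus; it replaces the modulus $d$ by $bd'$ with a smooth sum only over $d'$, so you must invoke Proposition~\ref{prop:primes-ap} with its parameter $b$ equal to that divisor (this is precisely why the proposition allows $b\leq Q^{\eps}$), and you must separately control the tail $b>Q^{\eps}$, where the proposition is inapplicable since it needs $W\gg Q^{1-\eps}$. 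The paper handles this tail by the large-sieve estimate applied to $\sum_{q\leq 3Q}\abs{\Delta(q)}$, gaining the factor $1/b\leq Q^{-\eps}$; your sketch contains no estimate for it, so the step ``applying Proposition~\ref{prop:primes-ap} to the sum over $d$'' does not go through as written.

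Two further points. Your analytic inputs in the complementary regimes are miscalibrated: the classical zero-free region yields only a saving $\exp(-c\sqrt{\log(Q/d)})$ in $\sum_w\mu(w)w^{-1}\Psi(dw/Q)$, not a power $(d/Q)^{c\eps}$; fortunately no cancellation in $w$ is needed there, since for fixed $d$ that sum is $O(1)$ (support of $\Psi$) and the large-sieve lemma over moduli $\leq Q^{1-\eps}$ already gives $\ll Q^{1-\eps}\sqrt{X}(\log Q)^{O(1)}$, which suffices. Similarly, your ``coprimality-correction terms'' are supported on prime powers $p^k$ with $p$ dividing the modulus, hence are trivially $O(R(\log Q)^{2})$; no zero-free region or Siegel-zero discussion is needed, and it is worth realizing that if a genuine low-conductor sum $\sum_n\chi^*(n)\Lambda(n)n^{-1/2}\hat\phi(\log n/\log Q)$ had survived with weight of size $1$, the classical zero-free region would not save you for conductors up to $Q^{\eta}$ and lengths up to $Q^{2+\kappa}$ --- the exact M\"obius cancellation is indispensable, which is why the paper inserts the coprimality condition on the M\"obius variable before subtracting the low-conductor characters so that the cancellation is identically zero. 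Finally, mind the lower-bound constraint on $R$: Proposition~\ref{prop:primes-ap} requires $R\geq X^{1/2}Q^{-1}$ and the large-sieve term $X/R$ forces $R\gg Q^{\kappa/2+\eps}$, so your $\eta$ must exceed roughly $\kappa/2$ rather than merely being ``small''.
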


We break down the proof of Proposition \ref{prop:bound-sumprimes} into the following three sections. 

\subsection{Orthogonality and partition of unity}

Applying character orthogonality for primitive characters (see the third display in the proof of Lemma~4.1 of~\cite{BuiMilinovich2011}), we get
\begin{align}\label{eq:Skappa-orthog}
S_\phi(Q) = \frac{2}{\log Q} \summ{2}{v,w} \Psi \left( \frac{vw}{Q}\right) \frac{\mu(v)}v \frac{\varphi(w)}w \sum_{n \equiv 1 \mod{w}} \frac{\Lambda(n)}{\sqrt{n}}\hat{\phi}\left(\frac{\log n}{\log Q} \right).
\end{align}
Let~$V$ be any test function generating the partition of unity
$$ \sum_{j\in\Z} V\Big(\frac x{2^j}\Big) = 1 $$
for all~$x>0$. Inserting this in~\eqref{eq:Skappa-orthog}, we obtain
$$ S_\phi(Q) = \frac{2}{\log Q} \ssum{j\in\Z \\ 1/2 \leq X := 2^j \leq 2Q^{2+\kappa}} \summ{2}{v,w} \Psi \left( \frac{vw}{Q}\right) \frac{\mu(v)}v \frac{\varphi(w)}{w} \sum_{n \equiv 1 \mod{w}} \frac{\Lambda(n)}{\sqrt{n}} V\Big(\frac nX\Big) \hat{\phi}\left(\frac{\log n}{\log Q} \right). $$
Set~$f_{j}(x) = x^{-1/2}V(x) {\hat \phi}(\frac{\log(2^j x)}{\log Q})$ for~$\frac12 \leq 2^j \leq 2Q^{2+\kappa}$. Differentiating the product, we have that for all~$k\geq 0$, there exists~$C_{\phi,k}\geq 0$ such that~$\|f_j^{(k)}\|_\infty \leq C_{\phi,k}$ for all~$j$. We deduce
$$ S_\phi(Q) \ll   \sup_{1 \ll X \ll Q^{2+\kappa}} X^{-1/2} \sup_f \abs{T(Q, X)}, $$
where~$f$ varies among test functions subject to~$\|f^{(k)}\|_\infty \leq C_{\phi,k}$, and
$$ T(Q, X) :=  \summ{2}{v,w} \Psi \left( \frac{vw}{Q}\right) \frac{\mu(v)}v \frac{\varphi(w)}w \sum_{n \equiv 1 \mod{w}} \Lambda(n) f\Big(\frac nX\Big). $$

We handle the very small values of~$X$ by the trivial bound
$$ \sum_{n \equiv 1 \mod{w}} \Lambda(n) f\Big(\frac nX\Big) \ll \log Q \ssum{X/2 < n < 3X \\ n\neq 1, n\equiv 1\mod{w}} 1 \ll \frac{X\log Q}w, $$
which implies
$$ T(Q, X) \ll \frac{X \log Q}Q \summ{2}{vw \asymp Q} 1 \ll X (\log Q)^2. $$
It will therefore suffice to show that for
$$ Q^2 / (\log Q)^6 \ll X \ll Q^{2+\kappa}, $$
we have
$$ T(Q, X) \ll \frac{\sqrt{X} Q}{\log Q}. $$

\subsection{Substracting the main term}\label{sec:prel-reduct}

We insert the coprimality condition~$(n, v) = 1$. Since
\begin{align*}
{}& \summ{2}{v,w} \Psi \left( \frac{vw}{Q}\right) \frac{\mu(v)}v \frac{\varphi(w)}w \ssum{n \equiv 1 \pmod{w} \\ (n, v)>1} \Lambda(n) f\Big(\frac nX\Big) \\
\ll {}& \sum_{v\ll Q}v^{-1} \ssum{p\mid v \\ 1 \leq k \ll \log X} (\log p) \sum_{w\mid p^k-1} 1 \\
\ll {}& Q^{1+\eps},
\end{align*}
we obtain
$$ T(Q, X) = \summ{2}{v,w} \Psi \left( \frac{vw}{Q}\right) \frac{\mu(v)}v \frac{\varphi(w)}w \ssum{n \equiv 1 \pmod{w} \\ (n, v)=1} \Lambda(n) f\Big(\frac nX\Big) + O(Q^{1+\eps}). $$
Let~$1\leq R < Q/2$ so that~$R<vw$ for any~$v,w$ appearing in the sum. We replace the condition~$n\equiv 1\mod{w}$ by~$\ud_R(n, w)$. The difference is
$$ \sum_q \frac1q \Psi \left( \frac{q}{Q}\right) \ssum{\chi \mod{q} \\ r = \cond(\chi)\leq R \\ r \mid q} \ssum{(n, q)=1} \Lambda(n) f\Big(\frac nX\Big) \chi(n) \sum_{v\mid q/r}  \mu(v) = 0 $$
since~$r<q$ by our choice of~$R$, so that
$$ T(Q, X) = \summ{2}{v,w} \Psi \left( \frac{vw}{Q}\right) \frac{\mu(v)}v \frac{\varphi(w)}w \ssum{(n, v)=1} \Lambda(n) f\Big(\frac nX\Big) \ud_R(n, w) + O(Q^{1+\eps}). $$
We next remove the coprimality condition on~$n$, using the trivial bound~\eqref{eq:ud-triv}.
For the first term~$\1_{n\equiv 1\mod{w}}$ in~$\ud_R(n, w)$, this was already justified above. For the second term, we get
$$ \ll R Q^{-1+\eps} \summ{2}{v, w \\ vw \asymp Q} \sum_{p\mid v}\log p \ll R Q^{\eps}. $$
Since~$R\ll Q$, both error terms are acceptable. We get
$$ T(Q, X) = T(Q, X, R) + O(Q^{1+\eps}), $$
where
$$ T(Q, X, R) := \summ{2}{v,w} \Psi \left( \frac{vw}{Q}\right) \frac{\mu(v)}v \frac{\varphi(w)}w \Delta(w), $$
\begin{equation}
  \Delta(w) := \sum_{n} \Lambda(n) f\Big(\frac nX\Big) \ud_R(n, w).\label{eq:def-Delta}
\end{equation}
We are required to show that
\begin{equation}
T(Q, X, R) \ll \frac{\sqrt{X}Q}{\log Q}.\label{eq:lastgoal-bound}
\end{equation}

\subsection{Reduction to the critical range}

We now impose the additional conditions
\begin{equation}
Q^{\kappa/2 + \eps} \leq R \leq Q^{1/2}, \qquad \kappa < 2/3.\label{eq:conditions-LS}
\end{equation}
Observe that this $\kappa$ is the same as that appearing in the statement of Proposition \ref{prop:primes-ap}. The condition $\kappa < \frac{2}{3}$ is convenient for applying \eqref{eq:bound-delta-LS} below, but is rather loose since $\kappa$ is ultimately required to be much smaller than $\frac{2}{3}$.

Let~$B\in[1, Q^{1/2}]$ be a parameter. In~$T(Q, X, R)$, we write~$\frac{\varphi(w)}w = \sum_{b\mid w}\frac{\mu(b)}b$ and exchange summation, so that
\begin{align*}
  T(Q, X, R) \leq {}& \sum_{b, v} \frac1{bv}\Big| \sum_{w} \Psi\Big(\frac{bvw}{Q}\Big) \Delta(bw)\Big| \\
  \ll {}& (\log B)^2 \sup_{b, v \leq B}\Big| \sum_{w} \Psi\Big(\frac{bvw}{Q}\Big) \Delta(bw)\Big| + E_1 + E_2, \\
\end{align*}
where~$E_1$ (resp.~$E_2$) corresponds to the sum over~$b, v$ restricted to~$b>B$ (resp.~$v>B$). We recall that~$\supp \Psi \subset[\frac12, 3]$ by hypothesis. On the one hand, we have
\begin{align*}
  E_1 {}& \ll \summ{2}{b, w \\ bw \leq 3Q \\ b > B} \frac1b \abs{\Delta(bw)} \\
  {}& \ll Q^{\eps/2} B^{-1}\sum_{q\leq 3Q} \abs{\Delta(q)} \\
  {}& \ll Q^{1+\eps/2} \sqrt{X}B^{-1},
\end{align*}
using~\eqref{eq:bound-delta-LS} along with our hypotheses~\eqref{eq:conditions-LS}. On the other hand, we have
\begin{align*}
  E_2 {}& \ll \summ{2}{b, w \\ bw \leq 3Q/B} \frac1b \abs{\Delta(bw)} \\
  {}& \ll Q^{\eps/2} \sum_{q\leq 3Q/B} \abs{\Delta(q)} \\
  {}& \ll Q \sqrt{X} (Q^{\eps/2} B^{-1} + Q^{-\eps})
\end{align*}
again by~\eqref{eq:conditions-LS} and~\eqref{eq:bound-delta-LS}; we have used the bounds~$Q^{-1+\eps}\sqrt{X} R^{-1} \ll Q^{-\eps}$ and~$Q^{-1+\eps} X^{3/8} \ll Q^{-\eps}$, which follow from~$Q^{\kappa/2+\eps} \leq R$ and~$\kappa < 2/3$ respectively upon reinterpreting~$\eps$.

Grouping the above, it will suffice to show that
$$ \sum_{w} \Psi\Big(\frac{bvw}{Q}\Big) \Delta(bw) \ll Q^{1-\eps}\sqrt{X} $$
uniformly for~$b, v \leq Q^\eps$ and test functions~$\Psi$ and~$f$. Assume now~$\kappa\in(0, \frac{50}{1093})$. Then the conditions on~$R$ in~\eqref{eq:conditions-LS} and in Proposition~4 overlap, so that we may apply Proposition~\ref{prop:primes-ap} with~$W = \frac{Q}{bv}$. This gives the above bound, and completes the proof of~\eqref{eq:lastgoal-bound}, hence of Proposition~\ref{prop:bound-sumprimes}.

\section{Exponential sums estimates}

In this section, we work out the modifications to be made to the arguments underlying~\cite{DeshouillersIwaniec1982a} in order to exploit current knowledge on the spectral gap of the Laplacian on congruence surfaces~\cite{KimSarnak2003}. We will follow the setting in Theorem~2.1 of~\cite{Drappeau2017}, since we will need to keep track of the uniformity in~$q_0$. We also take the opportunity to implement the correction recently described in~\cite{BombieriEtAl2019}.

Let~$\theta\geq 0$ be a bound towards the Petersson-Ramanujan conjecture, in the sense of~\cite[eq.~(4.6)]{Drappeau2017}. Selberg's $3/16$ theorem corresponds to~$\theta \leq 1/4$, and the Kim-Sarnak bound~\cite{KimSarnak2003} asserts that~$\theta\leq 7/64$.

\begin{proposition}\label{prop:newbound-DI}
  Let the notations and hypotheses be as in~\cite[Theorem~2.1]{Drappeau2017}. Then
  \begin{align*}
    & \underset{\substack{c\equiv c_0 \text{ and } d \equiv d_0 \mod{q}\\(qrd, sc) = 1}}{\sum_c \sum_d \sum_n \sum_r \sum_s} b_{n,r,s} g(c,d,n,r,s) \e\Big(n\frac{\bar{rd}}{sc}\Big) \\
    & \qquad \ll_{\eps, \eps_0} (qCDNRS)^{\eps+O(\eps_0)} q^{3/2} K(C, D, N, R, S) \|b_{N,R,S}\|_2,
  \end{align*}
  where~$\|b_{N,R,S}\|_2^2 = \sum_{n,r,s} \abs{b_{n,r,s}}^2$, and here
  \begin{equation}
    \begin{aligned}
      K(C, D, N, R, S)^2 ={}& qCS(RS+N)(C+RD) \\
      &{} + C^{1+4\theta} DS ((RS+N)R)^{1-2\theta}\red{(1 + \tfrac{qC}{RD})^{1-4\theta}} + D^2NR.
    \end{aligned}\label{eq:def-DI-K}
  \end{equation}
\end{proposition}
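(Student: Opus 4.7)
The plan is to follow the proof of Theorem~2.1 in~\cite{Drappeau2017} step by step, tracking carefully where bounds on the spectral gap of the Laplacian enter, and replacing them by the quantitative input provided by the parameter~$\theta$. Recall that the Deshouillers–Iwaniec machinery reduces the original sum, after Cauchy–Schwarz, Poisson summation in the~$n$ variable, and opening of the Kloosterman fractions~$\e(n\bar{rd}/(sc))$, to an average of sums of Kloosterman sums of the shape $\sum_c (cs,qrd)=1$-weighted $S(m_1, m_2; cs)$ summed over~$c$ with~$c\equiv c_0\mod{q}$, to which the Kuznetsov trace formula is applied (with the congruence on~$c$ handled by splitting into residue classes or by using Kuznetsov for the congruence subgroup~$\Gamma_0(q)$). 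This transforms the sum into spectral sums involving Maass forms, holomorphic cusp forms, and an Eisenstein contribution.

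First I would reproduce the reduction steps of~\cite{Drappeau2017}, which are purely combinatorial/analytic (Cauchy–Schwarz, smooth partitions, Poisson summation) and do not involve the spectral side; these yield unchanged the diagonal contribution~$qCS(RS+N)(C+RD)$ and the contribution~$D^2 NR$ appearing in~\eqref{eq:def-DI-K}. Next I would isolate the application of Kuznetsov's formula to the remaining Kloosterman sums. The key point is that in the dispersive bound for averages of Kloosterman sums over a congruence subgroup (analogue of Deshouillers–Iwaniec's bound on $\sum_c S(\ldots;c)$), the Maass form contribution is controlled by a sum of the shape $\sum_j \abs{\rho_j(n)}^2 / \cosh(\pi t_j)$ multiplied by the Bessel transform of the test function; forms with exceptional eigenvalues $\lambda_j = 1/4 - t_j^2 < 1/4$ satisfy $\abs{t_j}\leq i\theta$, and the Bessel transform of a sharp cutoff at scale $X\asymp (RS+N)R/(CS)^2$ (or similar) is $\ll X^{-2\theta}$ for these forms. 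Carrying this through gives a contribution of size~$C^{1+4\theta}DS((RS+N)R)^{1-2\theta}$, which is exactly the $\theta$-dependent term in~\eqref{eq:def-DI-K}.

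Concretely, I would trace through the proof in~\cite[\S4]{Drappeau2017} and locate the step where the bound $\theta \leq 1/4$ (Selberg) is inserted into a generic estimate of the form $Y^{2\theta}$; it then suffices to replace this by the general $\theta$, and the Kim-Sarnak bound $\theta \leq 7/64$ becomes a clean consequence once we take~$\theta$ to be any admissible exponent. Simultaneously, I would incorporate the correction of~\cite{BombieriEtAl2019}: this amounts to revisiting the split of the sum according to which variable~$c$ or~$cs$ carries the spectral information, and ensuring the Kloosterman-sum modulus used in Kuznetsov matches the modulus of the multiplicative character in~$\bar{rd}/(sc)$; the correction does not affect the final shape of~\eqref{eq:def-DI-K} but modifies intermediate estimates, which must be verified to still yield the three stated terms.

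The main obstacle is purely bookkeeping: keeping track of the uniformity in~$q_0$ (the residue class modulus~$q$ here), which requires using Kuznetsov for $\Gamma_0(q)$ or inserting the congruence $c\equiv c_0\mod{q}$ via characters before Kuznetsov, yielding the factor $q^{3/2}$ out front (arising from $q$ from the loss in Cauchy-Schwarz over residue classes and $q^{1/2}$ from bounds on Fourier coefficients of forms on~$\Gamma_0(q)$), while at the same time propagating the $\eps_0$ slack through the Bessel transform estimates. Once the reduction is checked to go through verbatim modulo these substitutions, the conclusion follows.
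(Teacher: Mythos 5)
Your overall strategy is the same as the paper's: the proof is a re-run of the argument of \cite[Theorem~2.1]{Drappeau2017}, locating where the exceptional Maass spectrum enters and replacing the Selberg-type input by a general admissible~$\theta$. Your identification of the source of the term $C^{1+4\theta}DS((RS+N)R)^{1-2\theta}$ — exceptional eigenvalues with $\abs{t_f}\leq\theta$ hitting the Bessel transform — matches the paper, which implements this by replacing $(NY)^{1/2}$ by $(NY)^{2\theta}Q^{1-4\theta}$ in Lemma~4.10 of~\cite{Drappeau2017} and propagating the change through Propositions~4.12 and~4.13 (including the factors $q_0^{1/2-2\theta}$ and $(1+X)^{-1+2\theta}$ needed for uniformity in~$q$, which is the bookkeeping you defer).

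There is, however, one genuine error in your plan: your account of the third term $D^2NR$ and of the correction in~\cite{BombieriEtAl2019}. You assert that $D^2NR$ comes out ``unchanged'' from the non-spectral reduction and that the correction ``does not affect the final shape'' of~\eqref{eq:def-DI-K}. In fact the original statements of \cite[Theorem~12]{DeshouillersIwaniec1982a} and \cite[Theorem~2.1]{Drappeau2017} have $D^2NRS^{-1}$ there, and that stronger bound is wrong in general: the content of the correction is precisely that the estimate for the term $\mathcal{A}_0$ (p.~706 of~\cite{Drappeau2017}) fails without extra hypotheses on $(b_{n,r,s})$, and the correct general bound $\mathcal{A}_0 \ll q^{-2}(\log S)^2\, D (NR)^{1/2}\, \|b_{N,R,S}\|_2^2$ is what produces $D^2NR$ in place of $D^2NRS^{-1}$. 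Your description of the correction (matching the Kloosterman modulus with the character modulus in Kuznetsov) is not what is at issue. Following your plan as written, you would either reproduce the uncorrected $D^2NRS^{-1}$, which cannot be justified, or be unable to account for the third term of~\eqref{eq:def-DI-K}; the treatment of $\mathcal{A}_0$ has to be redone along the lines of~\cite{BombieriEtAl2019} rather than quoted as an unchanged non-spectral term.
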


\begin{remark}
  The bound of Proposition~\ref{prop:newbound-DI} is monotonically stronger as~$\theta$ decreases, since the first term is larger \red{than~$CS(RS+N)(RD+qC)$}. Under the Petersson-\red{Ramanujan} conjecture for Maass forms, which predicts that~$\theta=0$ is admissible, the second term in~\eqref{eq:def-DI-K} is smaller than the first.
\end{remark}

\begin{proof}
  The proof of the proposition, as with all results of this type, relies on the Kuznetsov formula and large sieve inequalities for coefficients of automorphic forms. The application of the Kuznetsov formula requires one to understand the contribution of holomorphic forms, Eisenstein series, and Maass forms (whether the holomorphic forms appear depends on the sign of the variables inside the Kloosterman sum). We divide these forms into the \emph{exceptional spectrum} and the \emph{regular spectrum}. The exceptional spectrum consists of those (conjecturally non-existent) Maass forms whose eigenvalues $t_f = \frac{1}{2}+it_f$ have $t_f \in i \mathbb{R}$. By the definition of $\theta$ above we have that $|t_f|\leq \theta$ for all $f$ in the exceptional spectrum. The regular spectrum consists of everything that is not exceptional. The contribution of the regular spectrum is handled as in \cite{Drappeau2017}, and does not require any modification here. We improve upon the analysis of \cite{Drappeau2017} in handling the exceptional spectrum by keeping track of the dependence on $\theta$ (see the remark made in \cite[p. 703]{Drappeau2017}). The statements of \cite{Drappeau2017} which are affected are Lemma~4.10, Proposition~4.12, Proposition 4.13 and the proof of Theorem~2.1. The treatment of the exceptional spectrum rests upon a weighted large sieve inequality. These weighted large sieve inequalities are proved, following \cite{DeshouillersIwaniec1982a}, by an iterative procedure.

  With the notations of~\cite{Drappeau2017}, the changes to be made are as follows~:
  \begin{itemize}
  \item Lemma~4.10 bounds sums of the form
\begin{align*}
\sum_{\substack{q\leq Q \\ q_0 \mid q}} \sum_{\substack{f \in \mathcal{B}(q,\chi) \\ t_f \in i\mathbb{R}}} Y^{2|t_f|} \left|\sum_{N < n \leq 2N} n^{1/2} \rho_{f \infty}(n) \right|^2,
\end{align*}  
and serves to control the first step of the recursion. The bound
\begin{align*}
\sum_{\substack{q\leq Q \\ q_0 \mid q}} \sum_{\substack{f \in \mathcal{B}(q,\chi) \\ t_f \in i\mathbb{R}}} Y^{2|t_f|} \left|\sum_{N < n \leq 2N} n^{1/2} \rho_{f \infty}(n) \right|^2 &\ll (QN)^\varepsilon (Q q_0^{-1} + N + (NY)^{1/2}) N
\end{align*}
may be replaced by the bound
\begin{align*}
&\ll (QN)^\varepsilon \big(Q q_0^{-1} + N + (NY)^{2\theta}(Q^{1-4\theta} + N^{1-4\theta})\big)N.
\end{align*}
This does not require any change in the recursion argument, but merely to use the bound~$\abs{t_f} \leq \theta$ in the very last step, page 278 of~\cite{DeshouillersIwaniec1982a}, whereby~$\sqrt{Y/Y_1}$ is replaced by~$(Y/Y_1)^{2\theta}$.
  \item In Proposition~4.12 one bounds sums of the form
\begin{align*}
\sum_{\substack{m,n,r,s \\ (s,rq)=1}} a_m b_{n,r,s}\sum_{c \in \mathcal{C}(\infty,1/s)} \frac{1}{c}\phi\left(\frac{4\pi \sqrt{mn}}{c} \right) S_{\infty,1/s}(m,\pm n;c)
\end{align*}  
in terms of quantities $L_{\text{reg}}$ and $L_{\text{exc}}$. In place of 
\begin{align*}
L_{\text{exc}} = \left(1 + \sqrt{\frac{N}{RS}} \right) \sqrt{\frac{1+X^{-1}}{RS}} \left(\frac{MN}{RS+N} \right)^{1/4}\frac{\sqrt{RS}}{1+X} \sqrt{M}\| b_{N,R,S}\|_2
\end{align*}
we claim the improved
\begin{align*}
L_{\text{exc}} = q_0^{\frac12-2\theta} \Big(1 + \sqrt{\frac{N}{RS}}\Big) \Big(\frac{1+X^{-1}}{RS}\Big)^{2\theta} \Big(\frac{MN}{RS+N}\Big)^\theta \red{\Big(1 + \frac{M}{RS}\Big)^{1/2-2\theta}} \frac{\sqrt{RS}}{1+X} \sqrt{M} \|b_{N,R,S}\|_2.
\end{align*}
To obtain this bound one uses the new bound for Lemma~4.10 and follows the arguments of \cite[section 9.1]{DeshouillersIwaniec1982a}.
  \item In Proposition~4.13, one bounds
\begin{align*}
\sum_{\substack{c,m,n,r,s \\ (sc,rq)=1}}b_{n,r,s}\overline{\chi}(c)g(c,m,n,r,s)e(mt) S(n\overline{r},\pm m\overline{q};sc)
\end{align*}  
in terms of quantities $K_{\text{reg}}$ and $K_{\text{exc}}$. The term
\begin{align*}
K_{\text{exc}}^2 = C^3S^2 \sqrt{R(N+RS)}
\end{align*}
can be replaced by
\begin{align*}
K_{\text{exc}}^2 = C^{2+4\theta} S^2 (R(N+RS))^{1-2\theta} \red{(1 + \tfrac{M}{RS})^{1-4\theta}}. 
\end{align*}
This is seen by using the new definition on~$L_{\text{exc}}$ in Proposition~4.12, and by keeping track of a factor~$q^{-1+2\theta}$ coming from the term~$(1+X^{-1})^{2\theta} / (1+X)$.
  \item Finally, we modify the proof of Theorem~2.1 at two places. First, the bound for~$\mathcal{A}_0$ on page~706, as explained in~\cite{BombieriEtAl2019}, is wrong unless further hypotheses on~$(b_{n,r,s})$ are imposed. The correct bound in general is
    $$ \mathcal{A}_0 \ll q^{-2} (\log S)^2 D (NR)^{1/2} \|b_{N,R,S}\|_2, $$
    and this yields the term~$D^2NR$ instead of~$D^2NR S^{-1}$. Secondly, our new bound for~$K_{\text{exc}}$ in Proposition~4.13 gives a contribution~$C^{2+4\theta} S^2 (R(RS+N))^{1-2\theta} \red{(1 + \tfrac{M_1}{RS})^{1-4\theta}}$ instead of~$C^3 S^2 \sqrt{R(RS+N)}$ in the definition of~$L_{\text{exc}}^2$ and~$L^*(M_1)^2$ on p.707 of~\cite{Drappeau2017}. This yields a term~$C^{1+4\theta}DS((N+RS)R)^{1-2\theta}\red{(1 + \tfrac{qC}{RD})^{1-4\theta}}$ instead of~$C^2 DS \sqrt{(N+RS)R}$ in eq.~(4.39) of~\cite{Drappeau2017}, and by following the rest of the arguments we deduce our claimed bound.
  \end{itemize}
\end{proof}

\section{Primes in arithmetic progressions: Proof of Proposition \ref{prop:primes-ap}} \label{se:prop}

The proof of Theorem~\ref{th:main} relies on Proposition \ref{prop:primes-ap} which for the convenience of the reader we recall below. 

\begin{proposition*}
  Let~$\kappa \in (0, \frac{50}{1093})$ and~$\eps>0$. Let~$\Psi, f$ be test functions, $A>0$, $X, Q, W, R\geq 1$, and~$b\in\N$. Assume that
  \begin{equation*}
    \begin{aligned}
      \frac{Q^2}{(\log Q)^A} \ll X \ll Q^{2+\kappa}, \quad & \quad X^{11/20} Q^{-1} \leq R\leq Q^{2/3} X^{-2/9}, \\
      b \leq Q^\eps \quad & \quad Q^{1-\eps} \ll W \ll Q,
    \end{aligned}
  \end{equation*}
  and that~$\|f^{(j)}\|_\infty, \|\Psi^{(j)}\|_\infty \ll_j 1$. Then, if~$\eps>0$ is small enough in terms of~$\kappa$, we have
  $$ \ssum{w\in\N} \Psi\Big(\frac{w}{W}\Big) \ssum{n\in\N} \Lambda(n) f\Big(\frac{n}{X}\Big) \ud_R(n, bw) \ll Q^{1-\eps}\sqrt{X}. $$
  The implied constant depends at most on~$\kappa$, $A$, and the implied constants in the hypotheses.
\end{proposition*}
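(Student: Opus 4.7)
Expanding $\ud_R(n, bw) = \vphi(bw)^{-1}\sum_{\chi\mod{bw},\,\cond(\chi)>R}\chi(n)$, the proposition is an equidistribution statement for~$\Lambda$ in residue classes modulo~$bw\asymp Q$ at length~$X\gg Q^2$, with the contribution of low-conductor characters (conductor~$\leq R$) subtracted off. The plan is to follow Linnik's dispersion method in the style of Fouvry~\cite{Fouvry1985} and Bombieri--Friedlander--Iwaniec~\cite{BombieriFriedlanderEtAl1986}. First I would apply a combinatorial identity for~$\Lambda(n)$, for instance Heath-Brown's identity with three levels of decomposition as in~\cite[Proposition~13.4]{IwaniecKowalski2004}, reducing matters to bounding Type~I sums of the shape $\sum_{w}\Psi(w/W)\sum_{m\sim M}\alpha_m\sum_\ell f(m\ell/X)\ud_R(m\ell, bw)$ and Type~II sums $\sum_{w}\Psi(w/W)\sum_{m\sim M,\,\ell\sim L}\alpha_m\beta_\ell f(m\ell/X)\ud_R(m\ell, bw)$, across a spectrum of ranges of~$M$ to be optimized against~$\kappa$.

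For the Type~I sums, Poisson summation in~$\ell$ modulo~$bw$ (or equivalently Polya--Vinogradov after expanding~$\ud_R$ as a sum over characters of conductor~$>R$) is sufficient, provided~$M$ is small enough. Here the device of working with~$\ud_R$, coupled with the lower bound hypothesis~$R\geq X^{1/2}/Q$, is precisely what permits circumventing GRH, in the spirit of~\cite{Drappeau2015}: by subtracting small conductors we avoid exceptional Siegel-type contributions.

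For the Type~II sums I would apply Cauchy--Schwarz to smooth the rough coefficient~$\alpha_m$, then open the resulting square in~$\ell$ to produce a double sum over pairs~$(\ell_1, \ell_2)$. The diagonal contribution~$\ell_1=\ell_2$ reproduces the expected main term, which is neutralized by the~$\ud_R$ subtraction; the off-diagonal part, after Poisson summation in~$w$, rearranges into incomplete Kloosterman fractions~$\e(n\overline{rd}/(sc))$ of precisely the shape addressed by Proposition~\ref{prop:newbound-DI}, and the Kim--Sarnak bound~$\theta=7/64$ enters the final estimate via that proposition.

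The main obstacle, and the source of the explicit constant~$50/1093$, is the parameter optimization at the end. All the terms in~$K(C, D, N, R, S)^2$ from~\eqref{eq:def-DI-K}, together with the Type~I error, must be brought below~$Q^{1-\eps}\sqrt{X}$ simultaneously in the regime~$X\asymp Q^{2+\kappa}$ and~$W\asymp Q$, with an appropriate choice of~$R$ in the allowed window and of~$M$ at the Type~I/Type~II interface. The~$\theta$-dependent term~$C^{1+4\theta}DS((RS+N)R)^{1-2\theta}$ turns out to be the binding one, and substituting~$\theta=7/64$ with the induced constraints yields the threshold~$\kappa<50/1093$; any progress towards the Petersson--Ramanujan conjecture would mechanically enlarge this range, mirroring the analogous phenomenon observed by Iwaniec--Luo--Sarnak in the holomorphic setting.
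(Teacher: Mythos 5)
Your overall strategy (a combinatorial identity for $\Lambda$, then Poisson summation for the sums with a long smooth variable and Linnik's dispersion method plus the modified Deshouillers--Iwaniec bound of Proposition~\ref{prop:newbound-DI} with $\theta=7/64$ for the bilinear sums) is the same as the paper's. But as stated your decomposition does not close, and this is a genuine gap rather than a detail of optimization. Your Type~II treatment, like the paper's, can only handle a rough variable of size $N$ with $X^{\sigma}\ll N\ll X^{1/3-\delta}$, so the complementary Type~I sums must be handled for all $M\gg X^{2/3+\delta}$, i.e.\ smooth variable as short as roughly $X^{1/3}$. Plain Poisson summation in $\ell$ modulo $bw\asymp Q$ (or Polya--Vinogradov), with the dual frequencies bounded trivially, only wins when the smooth variable is longer than about $X^{1/2}$; to reach down to $X^{1/3+\lambda}$ one must exploit the additional cancellation in the sum over the smooth modulus variable $w$, via the reciprocity $\overline{m}h/(bw)\equiv -\overline{bw}h/m \pmod 1$ and Weil-type bounds for the resulting incomplete Kloosterman sums in $w$. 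Even with that improvement there remains an unavoidable middle case --- three factors each of size about $X^{1/3}$ --- which is neither Type~I nor Type~II in your sense; the paper isolates it with a bespoke combinatorial lemma (Lemma~\ref{lemma:combinatorial lemma}) as the ``Type $d_2$'' case with \emph{two} long smooth variables, and treats it by a separate application of the Deshouillers--Iwaniec bilinear Kloosterman bound (their Theorem~12, with the correction of~\cite{BombieriEtAl2019}), not by the dispersion method. Your plan omits this case entirely, and without it the exponent $50/1093$ (indeed any extension past the trivial range) is not reachable.

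A second, smaller inaccuracy: in the dispersion step it is not true that ``the diagonal reproduces the expected main term, which is neutralized by the $\ud_R$ subtraction.'' Opening the square produces three sums $\mathcal{D}_1-2\Re\mathcal{D}_2+\mathcal{D}_3$, each with its own main term, and the heart of the matter is showing that $\mathcal{M}_3-\mathcal{M}_1$ is small; this comparison amounts to completing the character sum beyond conductor $R$ and costs $Q^{\eps}M(N+N^2R^{-2})$, which is exactly where the lower bound $R\geq X^{1/2}Q^{-1}$ is used. So the role of subtracting conductors up to $R$ is quantitative (controlling the main-term mismatch in the dispersion), not a matter of avoiding Siegel-type characters in the Type~I sums as your sketch suggests. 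Your attribution of the final threshold to the $\theta$-dependent term alone is also an oversimplification: the exponent $50/1093$ emerges from intersecting the constraints of all three regimes (Type $d_1$, Type $d_2$, Type~II, plus the choice of $R$), not from a single binding term.
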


\begin{remark}
What is crucial in our statement is the size of the upper bound, which should be negligible with respect to~$Q\sqrt{X}$. On the other hand, we are only interested in values of~$X$ larger than~$Q^2$. This is in contrast with most works on primes in arithmetic progressions~\cite{FouvryIwaniec1983,BombieriFriedlanderEtAl1986,Zhang2014}, where the main challenge is to work with values of~$X$ much smaller than~$Q^2$, while only aiming at an error term which is negligible with respect to~$X$. The main point is that in both cases, the large sieve yields an error term which is always too large (see~\cite[Theorem~17.4]{IwaniecKowalski2004}), an obstacle which the dispersion method is designed to handle.
\end{remark}

In what follows, we will systematically write
$$ X = Q^{2+\varpi}, $$
so that~$-o(1) \leq \varpi \leq \kappa + o(1)$ as~$Q\to\infty$.

\subsection{Combinatorial identity}

We perform a combinatorial decomposition of the von Mangoldt function into sums of different shapes: Type $d_1$ sums have a long smooth variable, Type $d_2$ sums have two long smooth variables, and Type II sums have two rough variables that are neither too small nor too large. We accomplish this decomposition with the Heath-Brown identity and the following combinatorial lemma.

\begin{lemma}\label{lemma:combinatorial lemma}
Let $\{t_j\}_{1 \leq j \leq J} \in \mathbb{R}$ be non-negative real numbers such that $\sum_j t_j = 1$. Let $\lambda,\sigma,\delta \geq 0$ be real numbers such that
\begin{itemize}
\item $\delta < \tfrac{1}{12}$,
\item $\sigma \leq \tfrac{1}{6} - \tfrac{\delta}{2}$,
\item $2\lambda + \sigma < \frac{1}{3}$.
\end{itemize}
Then at least one of the following must occur:
\begin{itemize}
\item (Type $d_1$) There exists $t_j$ with $t_j \geq \tfrac{1}{3} + \lambda$.
\item (Type $d_2$) There exist $i,j,k$ such that $\tfrac{1}{3}-\delta < t_i,t_j,t_k < \tfrac{1}{3} + \lambda$, and
\begin{align*}
\sum_{t_j^* \not \in \{t_i,t_j,t_k\}} t_j^* < \sigma.
\end{align*}
\item (Type II) There exists $S\subset \{1,\ldots,J\}$ such that
\begin{align*}
\sigma \leq \sum_{j \in S} t_j \leq \tfrac{1}{3} - \delta.
\end{align*}
\end{itemize}
\end{lemma}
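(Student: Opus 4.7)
The strategy is to assume that both Type $d_1$ and Type II fail, and deduce that Type $d_2$ must hold, via a pigeonhole-style classification of the $t_j$. Call $t_j$ \emph{small} if $t_j<\sigma$, \emph{medium} if $\sigma\leq t_j\leq \tfrac13-\delta$, and \emph{large} if $t_j>\tfrac13-\delta$. The failure of Type II applied to singletons $S=\{j\}$ immediately excludes medium values, so every $t_j$ is small or large.

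Next I would show, by a greedy argument, that the total mass carried by the small $t_j$'s is strictly less than $\sigma$. Indeed, were it $\geq\sigma$, one could adjoin small indices to an initially empty $S$ one at a time and stop the first moment $\sum_{j\in S}t_j\geq\sigma$; because each increment is $<\sigma$, at that moment
$$\sigma\leq \sum_{j\in S}t_j<2\sigma\leq \tfrac13-\delta,$$
using the hypothesis $\sigma\leq\tfrac16-\tfrac\delta2$. This would witness Type II, contradicting our assumption.

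Let $L$ denote the set of large indices. The failure of Type $d_1$ gives $t_j<\tfrac13+\lambda$ for all $j$, hence $\tfrac13-\delta<t_j<\tfrac13+\lambda$ whenever $j\in L$. Combining with $1-\sigma<\sum_{j\in L}t_j\leq 1$, I would derive on the one hand $|L|(\tfrac13+\lambda)>1-\sigma$, which with $2\lambda+\sigma<\tfrac13$ forces $|L|\geq 3$; and on the other hand $|L|(\tfrac13-\delta)<1$, which with $\delta<\tfrac1{12}$ forces $|L|\leq 3$. Hence $|L|=3$, say $L=\{i,j,k\}$; the remaining indices are all small, so their sum is $<\sigma$, giving exactly Type $d_2$.

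The proof is purely elementary and I do not anticipate a genuine obstacle. The only thing to watch is that the three quantitative hypotheses on $\delta$, $\sigma$, and $\lambda$ are calibrated precisely for the three places where strict numerical control is needed: excluding medium singletons from Type II (which uses $2\sigma\leq\tfrac13-\delta$), forcing $|L|\geq 3$ (which uses $2\lambda+\sigma<\tfrac13$), and forcing $|L|\leq 3$ (which uses $\delta<\tfrac1{12}$).
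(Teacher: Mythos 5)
Your proof is correct and follows essentially the same route as the paper: exclude medium values via singleton Type II sets, use the greedy subcollection argument (with $2\sigma\leq\tfrac13-\delta$) to show the small mass is below $\sigma$, and then count the large indices to force exactly three of them, using $2\lambda+\sigma<\tfrac13$ and $\delta<\tfrac1{12}$ just as in the paper. No issues to report.
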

\begin{proof}
Assume that the Type $d_1$ case and the Type II case both fail. Then for every $j$ we have $t_j < \tfrac{1}{3} + \lambda$, and for every subset $S$ of $\{1,\ldots,J\}$ we either have
\begin{align*}
\sum_{j \in S} t_j < \sigma
\end{align*}
or
\begin{align*}
\sum_{j \in S} t_j > \frac{1}{3} - \delta.
\end{align*}
Let $s_1,\ldots,s_K$ denote those $t_j$ with $\tfrac{1}{3} - \delta < t_j < \tfrac{1}{3} + \lambda$. We will show that $K = 3$. Let $t_j^*$ be any other $t_j$, so that $t_j^* \leq \tfrac{1}{3} - \delta$, and therefore $t_j^* < \sigma$. We claim that
\begin{align*}
\sum_j t_j^* < \sigma.
\end{align*}
If not, then $\sum_j t_j^* > \tfrac{1}{3} - \delta$. By a greedy algorithm we can find some subcollection $S^*$ of the $t_j^*$ such that
\begin{align*}
\sigma < \sum_{j \in S^*} t_j^* \leq 2\sigma.
\end{align*}
Since $2\sigma \leq \tfrac{1}{3} - \delta$ this subcollection satisfies the Type II condition, in contradiction to our assumption.

Now we show that $K = 3$. Observe that $K \geq 3$, since if $K \leq 2$ we have
\begin{align*}
1 = \sum_j t_j = \sum_{i=1}^K s_i + \sum_j t_j^* < 2 \left(\tfrac{1}{3} + \lambda\right) + \sigma < 1.
\end{align*}
Furthermore, we must have $K \leq 3$, since if $K \geq 4$ we have
\begin{align*}
1 = \sum_j t_j \geq \sum_{i=1}^K s_i > 4 \left(\frac{1}{3} - \delta \right) > 1.
\end{align*}
This completes the proof.
\end{proof}

Using \emph{e.g.} Heath-Brown's combinatorial identity~\cite{Heath-Brown1982}, we deduce the following.

\begin{corollary}\label{cor:after-hb}
  Let~$f$ be a test function, $u:\N\to\C$ be any map, and~$X\geq 1$.
  Then there exists a sequence~$(C_j)_{j\geq 0}$ of positive numbers, depending only on~$f$, such that we have
  \begin{equation}
    \abs{\ssum{n\in\N} \Lambda(n) f\Big(\frac{n}{X}\Big) u(n)} \ll (\log X)^{8} (T_1 + T_2 + T_{\text{II}}), \label{eq:bound-hb}
  \end{equation}
  where
  \begin{align}
    T_1 ={}&  \sup_{\substack{N \gg X^{1/3+\lambda} \\ MN \asymp X}} \sup_{\substack{g\in {\mathcal G} \\ \beta \in {\mathcal S}}} \abs{\summ{2}{n\in\N \\ m\sim M} g\Big(\frac nN\Big) \beta_m u(mn)} , \label{eq:T1} \\
    T_2 = {}& \sup_{\substack{X^{1/3-\delta} \ll N_2 \leq N_1 \ll X^{1/3+\lambda} \\ MN_1N_2 \asymp X}} \sup_{\substack{g_1, g_2 \in \mathcal{G} \\ \beta \in {\mathcal S}}} \abs{\summ{3}{n_1, n_2 \in\N \\ m \sim M} g_1\Big(\frac {n_1}{N_1}\Big) g_2\Big(\frac {n_2}{N_2}\Big) \beta_m u(mn_1n_2) }, \label{eq:T2} \\
    T_{\text{II}} = {}& \sup_{\substack{X^{\sigma} \ll N \ll X^{1/3-\delta} \\ MN\asymp X}} \sup_{\alpha, \beta \in \mathcal{S}} \abs{\summ{2}{n \sim N \\ m\sim M} \alpha_{m} \beta_n u(mn) }, \label{eq:TII}
  \end{align}
  where the implied constants are absolute, $\mathcal{G}$ is the set of test functions~$g$ satisfying~$\|g^{(j)}\|_\infty \leq C_j$ and~$\mathcal{S}$ is the set of sequences~$(\beta_n)$ satisfying~$\abs{\beta_n} \leq d(n)^{8}$.
\end{corollary}

\begin{proof}
  By the Heath-Brown identity~\cite[Proposition~13.3]{IwaniecKowalski2004}, there exists bounded coefficients~$(c_J)_{1\leq J\leq 4}$ such that
  $$ \Lambda(n) = \sum_{J=1}^4 c_J \ssum{m_1, \dotsc, m_J \\ n_1, \dotsc, n_J \\ n= m_1 \dotsc m_J n_1 \dotsc n_J \\ m_j \leq (3X)^{1/4}} \log(n_1) \prod_j \mu(m_j) $$
  for any~$n$ involved in the left-hand side of~\eqref{eq:bound-hb}. Let~$\psi$ be a test function inducing a partition of unity in the sense that~$\sum_{j\in\Z} \psi(\frac x{2^j}) = 1$ for all~$x>0$. Then we have
  $$ \sum_{n\in\N} \Lambda(n) f\Big(\frac nX\Big) u(n) = \sum_{J=1}^4 c_J \sum_{(M_1, \dots, M_J, N_1, \dotsc, N_J) \in U_J} S(M_1, \dotsc, M_J, N_1, \dotsc, N_J), $$
  $$ S(M_1, \dotsc, N_J) = \sum_{m_1, \dotsc, n_J \in \N} \log(n_1) \Big(\prod_j \psi\Big(\frac{n_j}{N_j}\Big)\Big) \Big(\prod_j \mu^*(m_j)\psi\Big(\frac{m_j}{M_j}\Big)\Big) f\Big(\frac{m_1 \dotsc n_J}{X}\Big) u(m_1 \dotsc n_J), $$
  where~$U_J$ is the set of~$2J$-tuples of powers of~$2$ such that~$X/6 \leq M_1 \dotsc M_J N_1 \dotsc N_J \leq 6X$, and~$\mu^*(m) = \mu(m)$ if~$m\leq (3X)^{1/4}$ and~$0$ otherwise. We abbreviated~$m_1\dotsc n_J = m_1\dotsc m_J n_1 \dotsc n_J$. The set~$U_J$ has at most~$O((\log X)^{2J-1})$ elements. By Lemma~\ref{lemma:combinatorial lemma}, for each choice of~$J$ and~$(M_1, \dotsc, N_J) \in U_J$ we have either~$N \geq \frac16 X^{1/3+\lambda}$ for some~$N\in\{N_j\}$, or~$\frac16X^{1/3-\delta} \leq N', N'' \leq 6X^{1/3+\lambda}$ for some~$N', N'' \in \{N_j\}$, or~$\frac16X^{\sigma} \leq N \leq 6X^{1/3-\delta}$ for some subproduct~$N$ of~$N_j$ and~$M_j$ (here we used that for~$X$ large enough, we have~$(3X)^{1/4} < \frac16 X^{1/3-\delta}$). Sorting the sum over~$J$ and~$(M_1, \dotsc, N_J)$ according to this trichotomy, and writing~$\log(n_1) = \log N_1 + \log(n_1/N_1)$, the above is bounded in absolute values by
  $$ \ll (\log X)^8 (T_1^* + T_2^* + T_{\text{II}}^*), $$
  \begin{align*}
    T_1^* = {}& \sup_{\substack{X/6\leq MN \leq 6X \\ \frac16 X^{1/3+\lambda} \leq N \\ |r| \leq 8}} \sup_{\substack{g\in \{\psi, \psi \log\} \\ \beta \in \mathcal{S}}} \abs{\ssum{n\in \N \\ m\sim M} g\Big(\frac nN\Big) \beta_m f\Big(\frac{mn}{2^r MN}\Big) u(mn)}, \\
    T_2^* = {}& \sup_{\substack{X/6 \leq N_1N_2M \leq 6X \\ \frac16 X^{1/3-\delta} \leq N_1, N_2 \leq 6X^{1/3+\lambda} \\ |r|\leq 8}} \sup_{\substack{g_1, g_2 \in \{\psi, \psi\log\} \\ \beta \in \mathcal{S}}} \abs{\ssum{n_1, n_2 \in \N \\ m\sim M} g_1\Big(\frac{n_1}{N_1}\Big)g_2\Big(\frac{n_2}{N_2}\Big) \beta_m f\Big(\frac{n_1n_2m}{2^rN_1N_2M}\Big) u(n_1n_2m)}, \\
    T_{\text{II}}^* = {}& \sup_{\substack{X/6 \leq NM \leq 6X \\ \frac16X^\sigma \leq N \leq 6X^{1/3-\delta} \\ |r|\leq 8}} \sup_{\alpha, \beta \in \mathcal{S}} \abs{\ssum{m\sim M \\ n\sim N} \alpha_m \beta_n f\Big(\frac{mn}{2^rMN}\Big) u(mn)}.
  \end{align*}
  Here the conditions~$m\sim M$ and~$n\sim N$ in the sums were added by an additional bounded dichotomy (which is the reason for the presence of the sup over~$r$).
  Finally, letting~${\check f}$ be the Mellin transform of~$f$, we have by Mellin inversion~$f(x) = \frac1{2\pi} \int_{-\infty}^\infty {\check f}(it) x^{-it} \df t$, and the map~$t\mapsto {\check f}(it)$ is of Schwartz class on~$\R$. In particular, for~$M, N, r, g, \beta$ as in~$T_1^*$ we have
  $$ \abs{\ssum{n\in \N, m\sim M} g\Big(\frac nN\Big) \beta_m f\Big(\frac{mn}{2^rMN}\Big) u(mn)} \ll \sup_{t\in\R} \abs{\ssum{n\in \N, m\sim M} g_t\Big(\frac nN\Big) \beta_{m,t} u(mn)} $$
  where~$g_t(x) = (1+t^2){\check f}(it) x^{-it} g(x)$ (the factor~$1+t^2$ being included so that we could write a supremum) and~$\beta_{m,t}=m^{-it}\beta_m \in\mathcal{S}$. We note that~$g_t$ is a test function satisfying~$\|g_t^{(j)}\|_\infty \ll C_j$ where~$C_j := \sup_{0\leq k, \ell, m \leq j+2} \|t^k{\check f}(it)\|_\infty\|x^{-\ell} g^{(m)}(x)\|_\infty$ can be bounded in terms of~$f$ only. This yields the contribution of~$T_1$ in our claim. The contributions of~$T_2$ and~$T_{\text{II}}$ are obtained in the same way.
\end{proof}

In what follows, we successively consider~$T_1, T_2$ and~$T_{\text{II}}$, which we specialize at
$$ u(n) :=  \sum_{w\in\N} \Psi\Big(\frac{w}{W}\Big) \ud_R(n, bw), $$
and we will denote
$$ R = X^\rho. $$

\subsection{Type~$d_1$ sums}

We suppose~$M$ and~$N$ are given as in~\eqref{eq:T1}. The quantity we wish to bound is
\begin{align*}
T_1(M, N) = \sum_{w} \Psi\Big(\frac wW\Big) \ssum{m \sim M \\ (m, bw)=1} \beta_m\Big(&\ssum{n\in\N \\ mn \equiv 1\mod{bw}} g\Big(\frac nN\Big) \\
{}& - \frac1{\varphi(bw)} \ssum{\chi\mod{bw}\\\cond(\chi)\leq R} \chi(m) \sum_{(n,bw)=1} \chi(n) g\Big( \frac{n}{N}\Big) \Big).
\end{align*}
By Poisson summation and the classical bound on Gauss sums~\cite[Lemma~3.2]{IwaniecKowalski2004}, we have
\begin{align*}
\sum_{n \equiv \overline{m} \mod{bw}} g \left(\frac{n}{N} \right) &= \frac{N}{bw}\hat{g}(0) + \frac{N}{bw}\sum_{0 <|h| \leq W^{1+\eps}/N} \hat{g}\left(\frac{Nh}{bw} \right)e \left(\frac{\overline{m}h}{bw} \right) + O \left(Q^{-A} \right), \\
\frac{1}{\varphi(bw)}\sum_{(c,bw)=1} \chi(c) g \left(\frac{c}{N} \right) &= \frac{N}{bw}\hat{g}(0)\1(\chi=\chi_0) + O\left(\frac{Q^\eps R^{1/2}}{W}\right).
\end{align*}
Therefore,
\begin{align*}
T_1(M, N) = \frac{N}b\sum_{w} \frac1w \Psi\left( \frac{w}{W}\right) \sum_{\substack{(m,bw)=1 \\ m \sim M}} \beta_m \sum_{0 <|h| \leq W^{1+\eps}/N} \hat{g}\left(\frac{Nh}{bw} \right) e\left(\frac{\overline{m}h}{bw} \right) + O(M R^{3/2} Q^{\eps}).
\end{align*}
Our goal is to get cancellation in the exponential phases by summing over the smooth variable $w$. We apply the reciprocity formula
\begin{align*}
\frac{\overline{m}h}{bw} \equiv - \frac{\overline{bw}h}{m} + \frac{h}{mbw} \ (\text{mod }1),
\end{align*}
which implies
\begin{align*}
T_1(M, N) = {}&\frac Nb\sum_{w} \frac1w \Psi\left( \frac{w}{W}\right) \sum_{\substack{(m,bw)=1 \\ m \sim M}} \beta_m \sum_{0 <|h| \leq W^{1+\eps}/N} \hat{g}\left(\frac{Nh}{bw} \right) e\left(\frac{\overline{bw}h}{m} \right) \\
& \qquad + O(M R^{3/2} Q^{\eps} + Q^{1+\eps}N^{-1}).
\end{align*}
We rearrange the sum as
\begin{align*}
\frac N{bW} \sum_{\substack{(m,b)=1 \\ m \sim M}} \beta_m \sum_{0 <|h| \leq W^{1+\eps}/N} \ \sum_{(w,m)=1}\hat{g}\left(\frac{Nh}{bw} \right) \frac Ww \Psi \left( \frac{w}{W}\right) e \left(\frac{\overline{bw}h}{m} \right).
\end{align*}
By partial summation and a variant of the Weil bound~\cite[eq.~(2.4)]{Drappeau2015}, the sum on $w$ is
\begin{align*}
\ll ((h, m)WM^{-1} + \sqrt{(h,m)} \sqrt{M}) Q^\eps.
\end{align*}
Summing over $h$ and $m$, we obtain a bound
\begin{align*}
T_1(M, N) \ll Q^{1+\eps} + M^{3/2} Q^\eps + M R^{3/2} Q^\eps.
\end{align*}
This bound is acceptably small provided
\begin{align*}
N \gg{}& \Big(\frac{X}{Q}\Big)^{2/3+\eps} = X^{\frac{1}{3} + \frac{\varpi}{3(2+\varpi)}+\frac{1+\varpi}{2+\varpi}\eps}, \\ 
N \gg{}& \frac{X^{1/2}R^{3/2}}{Q^{1-2\eps}} = X^{\frac{\varpi}{2(2+\varpi)} + \frac32\rho + \frac{2\eps}{2+\varpi}}.
\end{align*}
These inequalities are satisfied, for all sufficiently small~$\eps>0$, under the assumptions
\begin{equation}
\lambda > \frac{\varpi}{3(2+\varpi)}, \qquad \rho < \frac{4+\varpi}{9(2+\varpi)}.\label{eq:hyp-T1}
\end{equation}

We have proved the following.
\begin{lemma}\label{lem:bd-T1}
  Under the notations and hypotheses of Corollary~\ref{cor:after-hb}, and assuming~\eqref{eq:hyp-T1}, we have
  $$ T_1 \ll Q^{1-\eps}\sqrt{X}. $$
  The implied constant depends on~$\lambda, \rho$ and~$\varpi$.
\end{lemma}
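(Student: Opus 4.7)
The plan is to handle $T_1$ by a standard Type~I argument from the dispersion method: apply Poisson summation in the smooth variable $n$, cancel the zero-frequency term against the principal-character piece of the $R$-truncated main term, and exploit cancellation in the remaining sum over the long modulus $w$ via reciprocity and a Weil-type bound. Because the length $N \gg X^{1/3+\lambda}$ is comfortably large, no arithmetic input beyond classical character-sum estimates should be required.

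First I would open the definition of $\ud_R(mn, bw)$, giving
$$ T_1(M, N) = \sum_w \Psi\Big(\frac{w}{W}\Big) \ssum{m \sim M \\ (m, bw) = 1} \beta_m \bigg( \ssum{n \\ mn \equiv 1 \mod{bw}} g\Big(\frac{n}{N}\Big) - \frac{1}{\vphi(bw)} \ssum{\chi \mod{bw} \\ \cond(\chi) \leq R} \chi(m) \ssum{(n, bw) = 1} \chi(n) g\Big(\frac{n}{N}\Big) \bigg). $$
Poisson summation in $n$ in the first inner sum produces a main term $(N/bw)\hat g(0)$ plus a sum over frequencies $0 < |h| \ll W^{1+\eps}/N$. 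In the character main term, the $\chi = \chi_0$ contribution matches this zero-frequency piece up to negligible error, while the non-principal contributions, bounded trivially using $\sqrt{\cond(\chi)}$ for Gauss sums, yield an acceptable error $O(M R^{3/2} Q^\eps)$.

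Next I would apply the reciprocity relation $\bar m h/(bw) \equiv -\overline{bw}\,h/m + h/(mbw) \pmod 1$, incurring an additional error $O(Q^{1+\eps}/N)$ from linearising the second fraction. After this swap, $w$ is a smooth variable of length $W \asymp Q$ and $m$ is comparatively short, so rearranging the summations to place the $w$-sum innermost, partial summation combined with the Weil-type bound of \cite[Lemma~2.4]{FouvryRadziwill2018} yields $\ll \sqrt{(h,m)}\sqrt{M} Q^\eps$ for the inner sum. Summing over $|h| \ll W^{1+\eps}/N$ and $m \sim M$ then gives
$$ T_1(M, N) \ll Q^{1+\eps} + M^{3/2} Q^\eps + M R^{3/2} Q^\eps. $$

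Finally, using $MN \asymp X = Q^{2+\varpi}$, the target bound $T_1 \ll Q^{1-\eps}\sqrt{X}$ translates into two lower bounds on $N$, namely $N \gg (X/Q)^{2/3+\eps}$ and $N \gg X^{1/2} R^{3/2}/Q^{1-2\eps}$; both follow from the hypotheses~\eqref{eq:hyp-T1} on $\lambda$ and $\rho$ after a brief computation with the parametrisation $X = Q^{2+\varpi}$. There is no real obstacle in this range: the generous size of~$N$ makes Type~$d_1$ the easiest of the three cases, and the genuine difficulty of the paper lies in the later Type~$d_2$ and Type~II ranges, where the lengths are forced to be comparable and one must invoke the full strength of Proposition~\ref{prop:newbound-DI}.
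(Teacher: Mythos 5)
Your proposal is correct and follows essentially the same route as the paper: Poisson summation in the smooth variable $n$, cancellation of the zero frequency against the principal-character part of the $R$-truncated term with a Gauss-sum error $O(MR^{3/2}Q^{\eps})$, reciprocity with error $O(Q^{1+\eps}/N)$, and the Weil-type bound of \cite[Lemma~2.4]{FouvryRadziwill2018} applied to the $w$-sum, leading to $T_1(M,N)\ll Q^{1+\eps}+M^{3/2}Q^{\eps}+MR^{3/2}Q^{\eps}$ and the same two lower bounds on $N$ deduced from~\eqref{eq:hyp-T1}.
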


\subsection{Type~$d_2$ sums}

The treatment of the type~$d_2$ sums~\eqref{eq:T2} is nearly identical to~\cite[Section~14]{BombieriFriedlanderEtAl1986}. For convenience, we rename~$(N_1, N_2, M)$ into~$(M, N, L)$ so that we have~$MNL\asymp X$. We wish to bound
\begin{align*}
  T_2(M, N, L) = \sum_{\ell \sim L} \beta_\ell \sum_{(w,\ell)=1} & \Psi\left(\frac{w}{W} \right)\bigg(\summ{2}{m,n \\ \ell mn \equiv 1\mod{bw}} g_1\left(\frac{m}{M}\right) g_2 \left( \frac{n}{N}\right) \\ 
    & \qquad - \frac{1}{\varphi(bw)} \ssum{\chi\mod{bw} \\ \cond(\chi)\leq R} \chi(\ell) \summ{2}{(mn,bw)=1} g_1\left(\frac{m}{M} \right) g_2\left(\frac{n}{N} \right)  \chi(mn) \bigg).
\end{align*}
We perform Poisson summation on the $m$-sums to get
\begin{align*}
\sum_{m \equiv \overline{\ell n} \mod{bw}} g_1\left(\frac{m}{M} \right) &= \frac{M}{bw} \sum_{|h| \leq H} \hat{g_1}\left(\frac{Mh}{bw} \right) e\left(\frac{\overline{\ell n}h}{bw} \right) + O(Q^{-A}), \\
\sum_{(m,bw)=1} \chi(m) g_1\left(\frac{m}{M} \right) &= \frac{\varphi(bw)}{bw}M\hat{g_1}(0)\1(\chi=\chi_0) + O \left(Q^{\eps}R^{1/2} \right),
\end{align*}
where $H = W^{1+\eps} M^{-1}$. The contribution of the error terms is
\begin{align*}
\ll LN R^{3/2} Q^\eps.
\end{align*}
The zero frequency of Poisson summation cancels out. For the non-zero frequencies we employ reciprocity in the form
\begin{align*}
e \left(\frac{\overline{\ell n}h}{bw} \right) =e \left(-\frac{\overline{bw}h}{\ell n} \right) + O \left(\frac{H}{LNW} \right),
\end{align*}
and the error term contributes a quantity of size $O(Q^{1+\eps})$. We therefore have
\begin{equation}\label{eq:T2-after-poisson}
\begin{aligned}
T_2(M, N, L) = {}& \frac{M}b \ssum{\ell \sim L \\ (\ell, b) = 1} \beta_\ell \sum_{\substack{(w,\ell)=1}} \frac1w \Psi \left(\frac{w}{W} \right)\sum_{\substack{(n,bw)=1 }}g_2\left(\frac{n}{N} \right) \sum_{0 <|h| \leq H} \hat{g_1} \left(\frac{Mh}{bw} \right)e \left(-\frac{\overline{bw}h}{\ell n} \right) \\
& \qquad + O(Q^{1+\eps} + LNR^{3/2}Q^\eps).
\end{aligned}
\end{equation}

We next separate the variables $h$ and $w$. We change variables to write
\begin{align*}
\hat{g_1} \left(\frac{Mh}{bw} \right) &= \frac{w}{M}\int_\R g_1\left(\frac{wy}{M} \right) e\left(-\frac{hy}{b} \right)dy.
\end{align*}
Since~$g_1$ and~$\Psi$ are test functions, the integral is restricted to $y \asymp M/W$. We move the integral to the outside to write the first term of the right-hand side of~\eqref{eq:T2-after-poisson} as
\begin{equation}\label{eq:T2-expsum}
  \ll \frac{M}{bW}\sup_{y \asymp M/W}\abs{ \sum_\ell \beta_\ell \sum_{0<\abs{h}\leq H} \e\left(-\frac{hy}{b} \right) \sum_w \sum_n \Psi\left(\frac{w}{W} \right) g_1\left(\frac{wy}{M} \right) g_2\left(\frac{n}{N} \right) \e\left(-\frac{\overline{bw}h}{\ell n} \right)}.
\end{equation}
We then use \cite[Theorem~12]{DeshouillersIwaniec1982a}, amended as described in~\cite{BombieriEtAl2019}, more specifically, with the dictionary (the bold symbols denote the variables names from~\cite{DeshouillersIwaniec1982a})
\begin{align*}
  {\bm{c}, \bm{C}} \leftrightarrow n, N, & \qquad {\bm{d}, \bm{D}} \leftrightarrow w, W, \\
  {\bm{n}, \bm{N}} \leftrightarrow h, H, &\qquad {\bm{r}, \bm{R}} \leftrightarrow b', b, \\
  {\bm{s}, \bm{S}} \leftrightarrow \ell, L, &\qquad {\bm{b}}_{\bm{n}, \bm{r}, \bm{s}} \leftrightarrow \1_{b'=b} \e(-hy/b)\beta_\ell.
\end{align*}
Since~$\lambda<1/6$, we have~$H \ll L$ if~$\eps$ is sufficiently small. Therefore, with the same notations, we find the bounds
$$ {\bm{K}}(\bm{C}, \bm{D}, \bm{N}, \bm{R}, \bm{S}) \ll b(NL^2(N+W) + N^2WL^{3/2} + W^2H)^{1/2}, $$
$$ \|{\bm b}_{\bm N, \bm R, \bm S}\|_2 \ll L^\eps (HL)^{1/2}. $$
It will also be easier to sum up the bounds if we assume
\begin{equation}
N \ll W^{1+\eps}.\label{eq:T2-hyp1}
\end{equation}
We find
\begin{align*}
T_2(M, N, L) &\ll LNR^{3/2} Q^\eps + Q^\eps\left(\sqrt{X}L + \sqrt{M}NL^{5/4} + L^{1/2}W\right) \\
&\ll LNR^{3/2} Q^\eps + Q^\eps\left(\sqrt{X}L + \sqrt{M}NL^{5/4}\right),
\end{align*}
the second inequality following since $L^{1/2}W \ll X^{1/2} L$.
This contribution is acceptable provided
\begin{equation}
M\gg X^{\frac{\varpi}{2(2+\varpi)} + \frac32\rho + \eps}, \qquad MN \gg X^{\frac{1}{2}+\frac{\varpi}{2(2+\varpi)}+\eps}\label{eq:T2-hyp2}
\end{equation}
and
\begin{equation}
M^{3/2}N^{1/2} \gg X^{\frac{1}{2} + \frac{\varpi}{2+\varpi}+2\eps}.\label{eq:T2-hyp3}
\end{equation}
The bounds~\eqref{eq:T2-hyp1}--\eqref{eq:T2-hyp3} are satisfied if
\begin{equation}
  \label{eq:hyp-T2}
  \delta < \frac1{12} - \frac{\varpi}{2(2+\varpi)}, \qquad \lambda < \frac16 - \frac{\varpi}{2(2+\varpi)}, \qquad \rho < \frac16.
\end{equation}

We therefore conclude the following.
\begin{lemma}\label{lem:bd-T2}
  Under the notations and hypotheses of Corollary~\ref{cor:after-hb}, and assuming~\eqref{eq:hyp-T2}, we have
  $$ T_2 \ll Q^{1-\eps}\sqrt{X}. $$
  The implied constant depends on~$\lambda, \delta, \rho$ and~$\varpi$.
\end{lemma}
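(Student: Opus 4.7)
The plan is to reduce $T_2$ via the combinatorial decomposition of Corollary~\ref{cor:after-hb} to bounding the Type-$d_2$ model sum $T_2(M, N, L)$, with smooth variables $m \sim M$, $n \sim N$, a rough $\ell \sim L$, and $MNL \asymp X$, averaged over $w$ against $\Psi(w/W)$. The windows $N_1, N_2 \in [X^{1/3-\delta}, X^{1/3+\lambda}]$ place $M$ and $N$ close to $X^{1/3}$, so $L$ is close to $X^{1/3}$ as well. I would then proceed via Poisson summation in $m$, reciprocity, and an application of the Kloosterman sum bound from Proposition~\ref{prop:newbound-DI}.

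First, applying Poisson in the $m$-variable modulo $bw$, and separately inside the small-conductor character terms inside $\ud_R(n,bw)$, the zero frequency of the former cancels the principal character contribution of the latter, while the remaining small-conductor terms are controlled trivially via the Gauss-sum estimate by $O(LNR^{3/2}Q^\eps)$. The non-zero frequencies $0 < |h| \leq H := W^{1+\eps}/M$ leave the phases $e(\overline{\ell n}h/(bw))$. Next I would apply the reciprocity $\overline{\ell n}h/(bw) \equiv -\overline{bw}h/(\ell n) \pmod{1}$ up to an error of size $H/(LNW)$ contributing $O(Q^{1+\eps})$ in total, and separate the variables $h$ and $w$ inside $\hat g_1(Mh/(bw))$ via the Fourier representation $(w/M)\int g_1(wy/M) e(-hy/b)\, dy$ with a compactly supported $y$-integral.

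At this point the sum is precisely in the shape required by~\cite[Theorem~12]{DeshouillersIwaniec1982a}. Invoking Proposition~\ref{prop:newbound-DI}, which incorporates the Kim--Sarnak bound $\theta = 7/64$ and the correction of~\cite{BombieriEtAl2019}, yields the bound $T_2(M,N,L) \ll LNR^{3/2}Q^\eps + Q^\eps(\sqrt X \, L + \sqrt M \, N L^{5/4})$, under the harmless assumption $N \ll W^{1+\eps}$. I would then translate the requirement that each of these terms be $\ll Q^{1-\eps}\sqrt X$ into numerical inequalities on $M$, $MN$, and $M^{3/2}N^{1/2}$ expressed in powers of $X$, $\varpi$, and $\rho$, and verify that each is implied by~\eqref{eq:hyp-T2} combined with $M, N \in [X^{1/3-\delta}, X^{1/3+\lambda}]$. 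The main obstacle is the last inequality $M^{3/2}N^{1/2} \gg X^{1/2+\varpi/(2+\varpi)+2\eps}$, which arises from the exceptional-spectrum contribution in Proposition~\ref{prop:newbound-DI} and constrains both windows simultaneously; balancing it against the other bounds is what ultimately fixes the numerical threshold $\tfrac{50}{1093}$ appearing in Theorem~\ref{th:main}.
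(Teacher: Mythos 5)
Your proposal follows essentially the same route as the paper's proof: Poisson summation in the smooth $m$-variable with the Gauss-sum error $O(LNR^{3/2}Q^\eps)$, cancellation of the zero frequency against the principal-character term, reciprocity at cost $O(Q^{1+\eps})$, separation of $h$ and $w$ by the Fourier integral, the Deshouillers--Iwaniec Kloosterman-sum bound under $N\ll W^{1+\eps}$ giving $Q^\eps(\sqrt{X}L+\sqrt{M}NL^{5/4})$, and translation into the conditions \eqref{eq:T2-hyp2}--\eqref{eq:T2-hyp3}, which \eqref{eq:hyp-T2} guarantees. The only slips are attributional rather than mathematical: the paper here invokes \cite[Theorem~12]{DeshouillersIwaniec1982a} amended as in \cite{BombieriEtAl2019} rather than Proposition~\ref{prop:newbound-DI} (the Kim--Sarnak exponent $\theta=7/64$ enters only in the Type~II analysis), and the threshold $\frac{50}{1093}$ arises from playing the $d_2$ upper bound $\delta<\frac1{12}-\frac{\varpi}{2(2+\varpi)}$ against the Type~II lower bound on $\delta$, not from the $d_2$ sums alone.
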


\subsection{Type~II sums}

In the type~II case~\eqref{eq:TII}, we wish to prove the bound
\begin{align*}
T_{\text{II}}(M, N) := \sum_{w}\Psi\left( \frac{w}{W}\right) \summ{2}{m, n} \alpha_m \beta_n \ud_R(mn, bw) \ll \sqrt{X}Q^{1-\eps},
\end{align*}
where $\alpha$ is supported at scale $M$, $\beta$ is supported at scale $N$, $MN \asymp X$, and $X^\sigma \ll N \ll X^{1/3-\delta}$. We have $|\alpha_m| \leq \tau(m)^{O(1)}$, and similarly for $\beta$. We use Linnik's dispersion method~\cite{Linnik1963a}, following closely~\cite{Fouvry1985}; see also~\cite[Section~10]{BombieriFriedlanderEtAl1986}.

We interchange the order of summation and apply the triangle inequality, writing our sum as
\begin{align*}
\abs{T_{\text{II}}(M, N)} \leq \sum_m \left|\sum_w \sum_n \right|.
\end{align*}
Applying the Cauchy-Schwarz inequality, we arrive at
\begin{equation}
  \label{eq:link-TII-D}
  T_{\text{II}}(M, N)^2 \ll M(\log M)^{O(1)} \cD,
\end{equation}
where
\begin{align*}
\cD = \sum_m f\left(\frac{m}{M} \right) \left|\summ{2}{n, w \\ mn \equiv 1 \mod{bw}}\Psi\left(\frac{w}{W}\right) \beta_{n} - \frac{1}{\varphi(bw)} \ssum{\chi\mod{bw} \\ \cond(\chi)\leq R} \summ{2}{n, w \\ (mn,bw)=1}\Psi\left(\frac{w}{W}\right)\beta_{n} \chi(mn)\right|^2.
\end{align*}
Here~$f$ is some fixed, non-negative test function majorizing~$\1_{[1, 2]}$. It suffices to show that
\begin{align*}
\mathcal{D}\ll NQ^{2-\eps}.
\end{align*}
We open the square and arrive at
\begin{equation}\label{eq:link-D-D123}
\mathcal{D} = \mathcal{D}_1 - 2\Re \mathcal{D}_2 + \mathcal{D}_3,
\end{equation}
say. We treat each sum $\mathcal{D}_i$ in turn.

\subsubsection{Evaluation of $\mathcal{D}_3$}

By definition we have
\begin{align*}
\mathcal{D}_3 &:= \sum_m f\left(\frac{m}{M} \right) \summ{4}{w_1,w_2,n_1,n_2 \\ (mn_1,bw_1)=1 \\ (mn_2,bw_2)=1} \summ{2}{\chi_1, \chi_2 \\ \chi_j \mod{bw_j} \\ \cond(\chi_j)\leq R} \Psi\left(\frac{w_1}{W} \right) \Psi\left(\frac{w_2}{W} \right) \beta_{n_1}\bar{\beta_{n_2}} \frac{\chi_1(mn_1)\bar{\chi_2(mn_2)}}{\varphi(bw_1)\varphi(bw_2)}.
\end{align*}
The computations in~\cite[p.~711--712]{Drappeau2017} can be directly quoted, putting formally
\begin{equation}
\gamma(q) = \1(b\mid q)\Psi(q/(bW)),\label{eq:link-gamma-f1}
\end{equation}
with the modification that~$\cond(\chi_1\bar{\chi_2})\leq R^2$ (instead of~$R$, as stated incorrectly in~\cite{Drappeau2017}).
Writing~$H = Q^\eps b[w_1, w_2]M^{-1}$, we get
\begin{align*}
\mathcal{D}_3 &= \mathcal{M}_3 + O\Big(Q^\eps \ssum{w_1, w_2 \asymp W \\ n_1, n_2 \asymp N} \frac1{\vphi(bw_1) \vphi(bw_2)} \ssum{\chi_1, \chi_2 \\ \cond(\chi_j) \leq R} \frac M{b[w_1, w_2]} \sum_{0<\abs{h}\leq H}R\sum_{d\mid(h, b[w_1, w_2])} d\Big) \\
&= \mathcal{M}_3 + O(Q^\eps N^2 R^5),
\end{align*}
where the main term is computed as in~\cite[p.~712]{Drappeau2017} to be
\begin{align*}
\mathcal{M}_3 := M \hat{f}(0)\summ{4}{w_1,w_2,n_1,n_2 \\ (n_j,bw_j)=1} \ssum{\chi\text{ primitive} \\ \cond(\chi)\leq R \\ \cond(\chi)\mid b(w_1, w_2)} \Psi\left(\frac{w_1}{W} \right) \Psi\left(\frac{w_2}{W} \right) \beta_{n_1}\bar{\beta_{n_2}} \chi(n_1\bar{n_2}) \frac{\varphi(bw_1w_2)}{bw_1w_2\varphi(bw_1)\varphi(bw_2)}.
\end{align*}
The error term is acceptable provided
\begin{align*}
N R^5 \ll Q^{2-\eps}.
\end{align*}
Since $N \ll X^{1/3}$ this is acceptable provided
\begin{equation}
  \label{eq:TII-hyp1}
  \rho < \frac{4-\varpi}{15(2+\varpi)}.
\end{equation}

\subsubsection{Evaluation of $\mathcal{D}_2$}

We have
\begin{align*}
\mathcal{D}_2 &:= \summ{4}{w_1,w_2,n_1,n_2 \\ (n_j,bw_j)=1} \ssum{\chi\mod{bw_2} \\ \cond(\chi)\leq R} \Psi\left(\frac{w_1}{W} \right)\Psi\left(\frac{w_2}{W} \right)\bar{\beta_{n_1}}\beta_{n_2} \frac{\chi(n_2)}{\varphi(bw_2)} \sum_{\substack{mn_1 \equiv 1 (bw_1) \\ (m,w_2)=1}} \chi(m) f\left(\frac{m}{M} \right).
\end{align*}
The computations in~\cite[p.~712--713]{Drappeau2017} can be also quoted directly with the identification~\eqref{eq:link-gamma-f1}.
We obtain
$$ \mathcal{D}_2 = \mathcal{M}_3 + O(R^{3/2}N^2 Q^{1+\eps}). $$
This is acceptable if
\begin{equation}
  \label{eq:TII-hyp2}
  \rho < \frac23\lambda + \frac{2(1-\varpi)}{9(2+\varpi)}.
\end{equation}

\subsubsection{Evaluation of $\mathcal{D}_1$}

We have
\begin{align*}
\mathcal{D}_1 &:= \summ{4}{w_1,w_2,n_1,n_2 \\ (n_j,bw_j)=1 \\ n_1 \equiv n_2 \mod{b}} \Psi\left(\frac{w_1}{W} \right) \Psi\left(\frac{w_2}{W} \right) \beta_{n_1}\bar{\beta_{n_2}} \sum_{\substack{mn_j \equiv 1 \mod{bw_j}}} f \left(\frac{m}{M} \right).
\end{align*}
We need to separate the variables $w_1,w_2,n_1,n_2$ from each other, and this requires a subdivision of the variables. We decompose these variables uniquely, following~\cite{FouvryRadziwill2018}, as follows:
\begin{align*}
\begin{cases}
d = (n_1,n_2), \\
n_1 = dd_1\nu_1, \text{ with }d_1 \mid d^\infty \text{ and } (d,\nu_1)=1, \\
n_2 = d\nu_2, \\
q_0 = (w_1,w_2), \\
w_i = q_0 q_i, i \in \{1,2\}.
\end{cases}
\end{align*}
The summation conditions imply
\begin{align*}
(dd_1\nu_1,q_0q_1)=(d\nu_2,q_0q_2)=1.
\end{align*}
We therefore have
\begin{align*}
\mathcal{D}_1 ={}& \sum_{(d,b)=1} \sum_{d_1 \mid d^\infty} \sum_{(q_0,d)=1} \mathcal{D}_1(d, d_1, q_0), \\
\mathcal{D}_1(\cdots) ={}& \summ{4}{q_1,q_2,\nu_1,\nu_2  \\ (d\nu_1, \nu_2) = (q_1, q_2) = 1 \\ (q_1q_2, d) = (\nu_1, d) = 1 \\ (\nu_1, q_1) = (\nu_2, q_2) = (\nu_1\nu_2, bq_0) = 1 \\ d_1\nu_1 \equiv \nu_2 \mod{bq_0}} \Psi\left(\frac{q_0q_1}{W} \right) \Psi\left(\frac{q_0q_2}{W} \right)\beta_{dd_1\nu_1}\bar{\beta_{d\nu_2}} \sum_{\substack{mdd_1\nu_1 \equiv 1 \mod{bq_0q_1} \\ md\nu_2 \equiv 1 \mod{bq_0q_2}}} f \left(\frac{m}{M} \right).
\end{align*}

Using smooth partitions of unity we break the variables into dyadic ranges: $d \asymp D, d_1 \asymp D_1, q_0 \asymp Q_0$. The contribution from $d \asymp D$ and  $d_1 \asymp D_1$ is
\begin{align*}
&\ll Q^\eps M \sum_{d \asymp D} \sum_{\substack{d_1 \mid d^\infty \\ d_1 \asymp D_1}} \sum_{\nu_1 \asymp N/dd_1}\sum_{\nu_2 \asymp N/d} |\beta_{dd_1\nu_1}| |\beta_{d\nu_2}| \ll Q^\eps M N^2\sum_{d \asymp D} \frac{1}{d^2} \sum_{\substack{d_1 \mid d^\infty }} \frac{\tau(d_1)^{O(1)}}{d_1} \left(\frac{d_1}{D_1} \right)^{1-\eps^2} \\
&\ll Q^\eps M N^2 D_1^{-1+\eps^2}D^{-1},
\end{align*}
where the sum over~$q_0, q_1$ was bounded by~$O(\tau_3(\abs{mdd_1\nu_1-1})) = O(Q^\eps)$, likewise for the sum over~$q_2$ (note that~$md\nu_2\neq 1$ and~$mdd_1\nu_1\neq 1$). This bound is acceptable provided
\begin{equation}\label{eq:TII-restriction-DD1}
DD_1 \gg \frac{X}{Q^{2-\eps}},
\end{equation}
so we may henceforth assume $DD_1 \ll X Q^{-2+\eps}$.

The contribution from $q_0 \asymp Q_0$ is
\begin{align*}
&\ll Q^\eps \sum_{q_0 \asymp Q_0}\sum_{q_1 \asymp Q/q_0} \summ{2}{n_1 \equiv n_2 \mod{q_0} \\ n_j \asymp N}\ssum{m\asymp M \\ m \equiv \overline{n_1}\mod{q_0q_1}} 1 \\
&\ll Q^\eps M \sum_{q_0 \asymp Q_0} \sum_{q_1 \asymp Q/q_0} \frac{1}{q_0q_1}\summ{2}{n_1 \equiv n_2 \mod{q_0} \\ n_j \asymp N} 1 \\
&\ll Q^\eps \left(MN^2 Q_0^{-1} + MN \right),
\end{align*}
where in the first line the sum over~$q_2$ was again bounded by~$\tau(\abs{md\nu_2-1})$. This is acceptable provided
\begin{equation}
  \label{eq:TII-hyp3}
  N \gg \frac{X}{Q^{2-\eps}}, \quad \text{ and } \quad Q_0 \gg \frac{X}{Q^{2-\eps}},
\end{equation}
so we may henceforth assume $Q_0 \ll X Q^{-2+\eps}$.

We use Poisson summation, following~\cite[pp.~714--716]{Drappeau2017}. Let
$$ \qt = bq_0q_1q_2, \qquad \mu \equiv
\begin{cases}
\bar{dd_1\nu_1} \pmod{bq_0q_1}, \\
\bar{d\nu_2} \pmod{bq_0q_2}.
\end{cases} $$
Note that~$\qt \geq \frac12 W \gg Q^{1-\eps}$. With~$H = \qt^{1+\eps}M^{-1} \ll Q^{2+\eps}/(q_0M)$, we get for any fixed~$A>0$
\begin{equation}
\sum_{m \equiv \mu\mod{\qt}} f\Big(\frac mM\Big) = \frac{M}{\qt} \sum_{\abs{h}\leq H} {\hat f}\Big(\frac{hM}{\qt}\Big)\e\Big(\frac{\mu h}{\qt}\Big) + O(Q^{-A}).\label{eq:TII-afterpoisson}
\end{equation}

The zero frequency in~\eqref{eq:TII-afterpoisson} contributes the main term, which, after summing over~$d, d_1, q_0$ (and reintegrating the values~$DD_1$, $Q_0$ larger than~$XQ^{-2+\eps}$ which were discarded earlier), is given by
$$ \mathcal{M}_1 := \frac Mb {\hat f}(0)\summ{4}{w_1,w_2,n_1,n_2 \\ (n_j,bw_j)=1 \\ n_1 \equiv n_2 \mod{b(w_1, w_2)}} \Psi\left(\frac{w_1}{W} \right) \Psi\left(\frac{w_2}{W} \right) \beta_{n_1}\bar{\beta_{n_2}} \frac1{[w_1, w_2]}. $$
The error term in~\eqref{eq:TII-afterpoisson} induces in~$\mathcal{D}_1(d, d_1, q_0)$ a contribution
$$ \ll Q^{-10} N^2, $$
and therefore in~$\mathcal{D}_1$ a contribution~$O(1)$, which is acceptable.

We solve the congruence conditions on~$\mu$ by writing
$$ d_1\nu_1 - \nu_2 = bq_0 t, \qquad \mu d d_1 \nu_1 = 1 + bq_0q_1 \ell, \qquad \mu d \nu_2 = 1 + bq_0q_2m, $$
with~$t, \ell, m \in \Z$. We deduce
$$ \mu d t = q_1\ell - q_2 m, \qquad t = q_1\nu_2\ell - q_2d_1\nu_1m. $$
Then we have the equalities, modulo~$\Z$,
\begin{align*}
  \frac{\mu}{\qt} = \frac\mu{bq_0q_1q_2} ={}& \frac1{dd_1\nu_1bq_0q_1q_2} + \frac{\ell}{dd_1\nu_1q_2} \\
  \equiv {}&  \frac1{dd_1\nu_1bq_0q_1q_2} + \frac{\ell\bar{dd_1}}{\nu_1q_2} + \frac{\ell\bar{\nu_1q_2}}{dd_1} \\
  \equiv {}&  \frac1{dd_1\nu_1bq_0q_1q_2} + \frac{t\bar{q_1\nu_2dd_1}}{\nu_1q_2} - \frac{\bar{bq_0q_1\nu_1q_2}}{dd_1} \\
  \equiv {}&  \frac1{dd_1\nu_1bq_0q_1q_2} + \frac{d_1\nu_1 - \nu_2}{bq_0} \frac{\bar{q_1\nu_2dd_1}}{\nu_1q_2} - \frac{\bar{bq_0q_1\nu_1q_2}}{dd_1}.
\end{align*}
By estimating trivially the first term, we have
\begin{equation}
\e\Big(\frac{h\mu}{\qt}\Big) = \e\Big(h\frac{d_1\nu_1 - \nu_2}{bq_0} \frac{\bar{q_1\nu_2dd_1}}{\nu_1q_2} - \frac{h\bar{bq_0q_1\nu_1q_2}}{dd_1}\Big) + O\Big(\frac{Hq_0}{NW^2}\Big).\label{eq:TII-decomp-exp}
\end{equation}
The error term here is~$\ll Q^\eps X^{-1}$, which contributes to~$\mathcal{D}_1(d, d_1, q_0)$ a quantity
$$ \frac{Q^{2+\eps}N}{X q_0^2 d d_1} \Big(1 + \frac Nd\Big), $$
and upon summing over~$(d, d_1, q_0)$, this contributes to~$\mathcal{D}_1$ a quantity~$O(Q^{2+\eps}N^2 X^{-1})$. This error is acceptable if
\begin{equation}
N \ll Q^{2-\eps}.\label{eq:TII-hyp4}
\end{equation}

Then we insert the first term of~\eqref{eq:TII-decomp-exp} in~\eqref{eq:TII-afterpoisson}, and insert the Fourier integral. The non-zero frequencies contribute a term
\begin{align*}
  \mathcal{R}_1(d, d_1, q_0) :={}& \frac{Mq_0}{bW^2} \int \summ{4}{q_1,q_2,\nu_1,\nu_2  \\ (d\nu_1, \nu_2) = (q_1, q_2) = 1 \\ (q_1q_2, d) = (\nu_1, d) = 1 \\ (\nu_1, q_1) = (\nu_2, q_2) = (\nu_1\nu_2, bq_0) = 1 \\ d_1\nu_1 \equiv \nu_2 \mod{bq_0}} \sum_{0<\abs{h}\leq H}
\Psi\left(\frac{q_0q_1}{W} \right) \Psi\left(\frac{q_0q_2}{W} \right)\beta_{dd_1\nu_1}\bar{\beta_{d\nu_2}}\times \\
{}& \qquad\qquad \times  f\Big(t\frac{q_0^2q_1q_2}{W^2}\Big) \e\Big(h\frac{d_1\nu_1 - \nu_2}{bq_0} \frac{\bar{q_1\nu_2dd_1}}{\nu_1q_2} - \frac{h\bar{bq_0q_1\nu_1q_2}}{dd_1}\Big) \e\Big(\frac{-htMq_0}{bW^2}\Big)\df t.
\end{align*}
So far, we have obtained under the conditions~\eqref{eq:TII-hyp3} and \eqref{eq:TII-hyp4} the bound
$$ \mathcal{D}_1 = \mathcal{M}_1 + \mathcal{R}_1  + O(NQ^{2-\eps}), $$
$$ \mathcal{R}_1 := \ssum{Q_0, DD1\ll XQ^{-2+\eps} \\ Q, D, D_1 \text{ dyadic}} \ssum{d\asymp D \\d_1\asymp D_1 \\q_0 \asymp Q_0} \mathcal{R}(d, d_1, q_0). $$

We now restrict the summation over~$q_1, q_2$ in residue classes modulo~$dd_1$, to account for the oscillatory factors.
Let~$\lambda_1, \lambda_2\in(\Z/dd_1\Z)^\times$, and
$$ b_{\bn,\br,\bs} = \mathop{\sum_{\nu_1} \sum_{\nu_2}}_{\substack{\nu_1 = \bs \\ \nu_2 d d_1 = \br \\ (d\nu_1, \nu_2) = (\nu_1\nu_2, bq_0) = 1 \\ (\nu_1, d) = 1 \\ d_1\nu_1 \equiv \nu_2 \pmod{bq_0}}} \sum_{\substack{0<\abs{h}\leq H \\ h(d_1\nu_1 - \nu_2) = bq_0 \bn}} \beta_{dd_1\nu_1} \bar{\beta_{d\nu_2}}  \e\Big(- \frac{h\bar{bq_0\lambda_1\nu_1\lambda_2}}{dd_1} - \frac{htMq_0}{bW^2}\Big), $$
$$ g(\bc,\bd,\bn,\br,\bs) = \Psi\Big(\frac{q_0\bc}{W}\Big) \Psi\Big(\frac{q_0\bd}{W}\Big) f\Big(\frac{t q_0^2\bc\bd}{W^2}\Big). $$
Then
$$ \mathcal{R}_1(d, d_1, q_0) = \frac{Mq_0}{bW^2} \int_{t\asymp_f 1} \sum_{\lambda_1, \lambda_2 \pmod{dd_1}^\ast} \tilde{\mathcal{R}}_1(t, (\lambda_j))\df t, $$
$$ \tilde{\mathcal{R}}_1(t, (\lambda_j)) = \sum_{\substack{\bn, \br, \bs, \bc, \bd \\ \bc\equiv\lambda_1,\ \bd\equiv \lambda_2 \pmod{dd_1} \\ (\bs\bc, \br\bd bdd_1)=1}} b_{\bn,\br,\bs} g(\bc,\bd,\bn,\br,\bs) \e\Big(\frac{\bn\bar{\br\bd}}{\bs\bc}\Big). $$
We apply Proposition~\ref{prop:newbound-DI}, with sizes given by
$$ \bC = \bD = \frac{W}{q_0}, \quad \bS = \frac{N}{dd_1}, \quad \bR = Nd_1, \quad \bN = \frac{HN}{dbq_0}. $$
Let~$X = Q^2Y$, so~$Y = Q^{\varpi}$. Note that
$$ \bR\bS \asymp N^2 D^{-1}, \qquad \bN \ll Q^\eps N^2Y^{-1}D^{-1}Q_0^{-2} \ll Q^\eps \bR\bS, \qquad \bC \ll Q^\eps \bR \bD. $$
We get
$$ \tilde{\mathcal{R}}_1(t, \lambda_j) \ll Q^\eps (DD_1)^{3/2} K \|b_{\bN,\bR,\bS}\|_2, $$
where
\begin{align*}
Q^{-\eps}K^2 \ll {}& Q^2N^4D^{-1}D_1Q_0^{-2} + Q^{2+4\theta}N^{4-6\theta} D^{-2+2\theta} D_1^{-2\theta} Q_0^{-2-4\theta}\red{(1+D/N)^{1-4\theta}} + Q^2N^3 Y^{-1} D^{-1} D_1 Q_0^{-4}.
\end{align*}
To bound the term~$\|b_{\bN,\bR,\bS}\|_2$, we assume
\begin{equation}
  X Q^{-2+\eps} = o(N),\label{eq:TII-hyp5}
\end{equation}
so that~$D = o(N)$ by virtue of the line below~\eqref{eq:TII-restriction-DD1}, and the case~$d_1\nu_1 = \nu_2$ never occurs in~$b_{\bn,\br,\bs}$. Then
\begin{align*}
\|b_{\bN,\bR,\bS}\|_2^2 &\leq \sum_{\substack{\nu_1, \nu_2, h \\ d_1\nu_1 \equiv \nu_2\mod{q_0} \\ 0<\abs{h}<H}} \abs{\beta_{dd_1\nu_1}\beta_{d\nu_2}}^2 \ll \frac{Q^{2+\varepsilon}}{Q_0 M}\frac{N}{D D_1}\left(\frac{N}{D Q_0} + 1 \right) \\
&\ll Q ^\eps (N^3 Y^{-1}D^{-2}D_1^{-1}Q_0^{-2} + N^2 Y^{-1}D^{-1}D_1^{-1}Q_0^{-1}).
\end{align*}
We deduce
$$ \tilde{\mathcal{R}}_1(t, (\lambda_j)) \ll Q^\eps \sum_{k=1}^6 Q^{\eta_{k,1}} N^{\eta_{k,2}} Y^{\eta_{k,3}} D^{\eta_{k,4}} D_1^{\eta_{k,5}} Q_0^{\eta_{k,6}}, $$
where for each~$k$,~$\eta_k = (\eta_{k, \ell})_{1\leq \ell \leq 6}$ is given by
\[
\{\eta_k\} = \left\{
  \pmat{1 \\ 3 \\ -1/2 \\ 1/2 \\ 3/2 \\ -3/2},
  \pmat{1 \\ 7/2 \\ -1/2 \\ 0 \\ 3/2 \\ -2},
  \pmat{2\theta + 1 \\ 3-3\theta \\ -1/2 \\ \theta \\ 1-\theta \\ -2\theta - 3/2},
  \pmat{2\theta + 1 \\ 7/2-3\theta  \\ -1/2 \\ \theta - 1/2 \\ 1-\theta \\ -2\theta - 2},
  \pmat{1 \\ 5/2 \\ -1 \\ 1/2 \\ 3/2 \\ -5/2},
  \pmat{1 \\ 3 \\ -1 \\ 0 \\ 3/2 \\ -3}
\right\}.
\]
Summing over~$\lambda_j$, integrating over~$t$, and multiplying by~$\frac{Mq_0}{bW^2} \ll Q^\eps N^{-1}Y Q_0$, we get
$$ \mathcal{R}_1(d, d_1, q_0) \ll Q^\eps \sum_{k=1}^6 Q^{\eta_{k,1}} N^{\eta_{k,2}-1} Y^{\eta_{k,3}+1} D^{\eta_{k,4}+2} D_1^{\eta_{k,5}+2} Q_0^{\eta_{k,6}+1}. $$
We sum over~$d, d_1$ and~$q_0$ in dyadic intervals of lengths~$D, D_1$ and~$Q_0$, obtaining
\begin{align*}
\ssum{d \asymp D \\ d_1 \asymp D_1,\ d_1 \mid d^\infty \\ q_0 \asymp Q_0 \\ (d, b) = (q_0, d) = 1} \mathcal{R}_1(d, d_1, q_0) \ll  Q^\eps \sum_{k=1}^6 Q^{\eta_{k,1}} N^{\eta_{k,2}-1} Y^{\eta_{k,3}+1} D^{\eta_{k,4}+3} D_1^{\eta_{k,5}+2} Q_0^{\eta_{k,6}+2}.
\end{align*}
Finally we sum this dyadically over~$Q_0, D, D_1$ subject to~$Q_0 + DD_1 \ll YQ^\eps$. We get
$$ \mathcal{R}_1 \ll Q^\eps \sum_{k=1}^6 Q^{\eta_{k,1}} N^{\eta_{k,2}-1} Y^{\eta_{k,3}+1 +\max(0, \eta_{k,6}+2) + \max(0, \eta_{k,4}+3, \eta_{k,5}+2)}. $$
Here, the terms for~$k=5, 6$ are majorized by the term~$k=1$, therefore,
$$ \mathcal{R}_1 \ll Q^\eps \sum_{k=1}^4 Q^{\theta_{k,1}} N^{\theta_{k,2}} Y^{\theta_{k,3}}, $$
where
\[
\{\theta_k\} = \left\{
\pmat{1 \\ 2 \\ 9/2},
\pmat{1 \\ 5/2 \\ 4},
\pmat{1+2\theta \\ 2-3\theta \\ 4-\theta},
\pmat{1+2\theta \\ 5/2-3\theta \\ 7/2-\theta}
\right\}.
\]
We conclude that
$$ \mathcal{D}_1 = \mathcal{M}_1 + O(Q^{2-\eps}N) $$
on the condition~$N \ll Q^{-\eps} \min(Q Y^{-9/2}, Q^{2/3} Y^{-8/3}, Q^{\frac{1-2\theta}{1-3\theta}} Y^{-\frac{4-\theta}{1-3\theta}}, Q^{2/3}Y^{-\frac{7-2\theta}{3(1-2\theta)}})$. Upon using~$\theta\leq 7/64$, these conditions are implied by
\begin{equation}
N \ll X^{-\eps}\min(X^{\frac{2-9\varpi}{2(2+\varpi)}}, X^{\frac{50-249\varpi}{43(2+\varpi)}}, X^{\frac{50-217\varpi}{75(2+\varpi)}}),\label{eq:TII-hyp6}
\end{equation}
and the above hypotheses~\eqref{eq:TII-hyp3}, \eqref{eq:TII-hyp4}, \eqref{eq:TII-hyp5}.

\subsubsection{Main terms}

The main terms~$\mathcal{M}_1$ and~$\mathcal{M}_3$, which are real numbers by the symmetry~$n_1\leftrightarrow n_2$, combine to form
\begin{align*}
  \mathcal{M}_1 - \mathcal{M}_3 = M\hat{f}(0)\summ{2}{w_1, w_2} {}& \Psi\Big(\frac{w_1}W\Big) \Psi\Big(\frac{w_2}W\Big) \frac{1}{b[w_1,w_2]\vphi(b(w_1, w_2))} \\
& \qquad \times\ssum{\chi\text{ prim} \\ \cond(\chi)> R \\ \cond(\chi)\mid b(w_1, w_2)} \summ{2}{n_1, n_2 \\ (n_j, bw_j)=1} \beta_{n_1}\bar{\beta_{n_2}} \chi(n_1)\bar{\chi(n_2)}.
\end{align*}
We may quote the computations in~\cite[p.~717]{Drappeau2017}, again with the identification~\eqref{eq:link-gamma-f1}, to obtain
$$ \abs{\mathcal{M}_3 - \mathcal{M}_1} \ll Q^\eps M(N + N^2 R^{-2}) \ll Q^\eps(X + NX R^{-2}). $$
This is acceptable provided
\begin{equation}
N \gg Q^{\varpi + \eps}, \qquad R \gg Q^{\frac{\varpi}2 + \eps}.\label{eq:TII-hyp7}
\end{equation}

\subsubsection{Conclusion}

The hypotheses~\eqref{eq:TII-hyp1}, \eqref{eq:TII-hyp2}, \eqref{eq:TII-hyp3}, \eqref{eq:TII-hyp4}, \eqref{eq:TII-hyp5}, \eqref{eq:TII-hyp6} and \eqref{eq:TII-hyp7} are all satisfied if
\begin{equation}
  \label{eq:hyp-TII}
  \varpi < 1/8, \qquad \varpi < \sigma < \frac13-\delta < \frac13 - \frac{242\varpi}{75(2+\varpi)}, \qquad \frac{\varpi}{2(2+\varpi)} < \rho < \frac19 - \frac{\varpi}{3(2+\varpi)}.
\end{equation}
We therefore conclude the following.
\begin{lemma}\label{lem:bd-TII}
  Under the notations and hypotheses of Corollary~\ref{cor:after-hb}, assuming~\eqref{eq:hyp-TII}, we have
  $$ T_{\text{II}} \ll \sqrt{X} Q^{1-\eps}. $$
\end{lemma}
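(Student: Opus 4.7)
The plan is to apply Linnik's dispersion method, in the form used by Fouvry~\cite{Fouvry1985} and Bombieri--Friedlander--Iwaniec~\cite{BombieriFriedlanderEtAl1986}. After writing $T_{\text{II}}(M, N) = \sum_m (\cdots)$ and applying Cauchy--Schwarz to the outer $m$-sum so as to discard the unstructured coefficient $\alpha_m$, the task reduces to controlling a dispersion quantity $\cD \ll NQ^{1-\eps}$, as displayed in~\eqref{eq:link-TII-D}. Opening the square of the inner discrepancy produces three pieces, $\cD = \cD_1 - 2\Re \cD_2 + \cD_3$, corresponding respectively to two congruences modulo $bw_1, bw_2$, to one congruence and one character, and to two characters. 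The goal is to show that the main terms of the three pieces largely cancel, leaving an error bounded by $NQ^{1-\eps}$.

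For $\cD_2$ and $\cD_3$, I would quote the computations of~\cite[p.~712--713]{Drappeau2017} with the identification $\gamma(q) = \1_{b\mid q}\Psi(q/(bW))$ and the correct conductor bound $\cond(\chi_1\bar\chi_2) \leq R^2$. These produce a common main term $\mathcal{M}_3$ modulo acceptable errors $O(N^2 R^5 Q^\eps)$ and $O(R^{3/2}N^2 Q^{1+\eps})$, yielding the constraints~\eqref{eq:TII-hyp1} and~\eqref{eq:TII-hyp2}. The residual difference $\mathcal{M}_3 - \mathcal{M}_1$ is a sum over primitive characters of conductor strictly larger than $R$ dividing $b(w_1, w_2)$, whose size is controlled by the large-conductor gain, forcing the lower bound $R \gg Q^{\varpi/2+\eps}$ recorded in~\eqref{eq:TII-hyp7}.

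The crucial and most delicate contribution is $\cD_1$, containing the two congruences $mn_j \equiv 1 \pmod{bw_j}$. Here I would decompose the variables dyadically around the common factors $d = (n_1, n_2)$, $q_0 = (w_1, w_2)$, together with a $d^\infty$-part $d_1$, then apply Poisson summation to the long $m$-variable modulo $\qt = bq_0q_1q_2$. The zero frequency produces $\mathcal{M}_1$ with a negligible error, while the nonzero frequencies carry an exponential phase $\e(h\mu/\qt)$ that I would manipulate through twisted reciprocity into the shape $\e(n\overline{rd}/(sc))$, the form to which Proposition~\ref{prop:newbound-DI} applies. Invoking that proposition with $\bC = \bD = W/q_0$, $\bS = N/(dd_1)$, $\bR = Nd_1$ and $\bN = HN/(dbq_0)$, where $H = \qt^{1+\eps}/M$, produces six error terms of shape $Q^{\eta_{k,1}} N^{\eta_{k,2}} Y^{\eta_{k,3}} D^{\eta_{k,4}} D_1^{\eta_{k,5}} Q_0^{\eta_{k,6}}$ to be summed dyadically over $d, d_1, q_0$ subject to $Q_0 + DD_1 \ll YQ^\eps$.

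The main obstacle is the careful bookkeeping of these six contributions and the extraction of the optimal constraint on $\varpi$. Of the six terms, two are dominated by others and four survive the dyadic summation; the most restrictive of them involves the exponent $\frac{7-2\theta}{3(1-2\theta)}$ arising from the ``exceptional spectrum'' term $C^{1+4\theta} DS((RS+N)R)^{1-2\theta}$ in~\eqref{eq:def-DI-K}. Substituting the Kim--Sarnak value $\theta = 7/64$ and combining with the combinatorial freedom afforded by Lemma~\ref{lemma:combinatorial lemma} produces the threshold $\kappa < 50/1093$. The conjunction of all arising constraints~\eqref{eq:TII-hyp1}--\eqref{eq:TII-hyp7} then collapses precisely to the hypothesis~\eqref{eq:hyp-TII}, completing the argument.
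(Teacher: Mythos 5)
Your proposal follows essentially the same route as the paper: Cauchy--Schwarz and the dispersion decomposition $\cD = \cD_1 - 2\Re\cD_2 + \cD_3$, quoting~\cite{Drappeau2017} for $\cD_2, \cD_3$ and the main-term cancellation, and treating $\cD_1$ via the $(d, d_1, q_0)$ decomposition, Poisson summation, reciprocity, and Proposition~\ref{prop:newbound-DI} with exactly the sizes $\bC = \bD = W/q_0$, $\bS = N/(dd_1)$, $\bR = Nd_1$, $\bN = HN/(dbq_0)$, followed by the same dyadic bookkeeping with $\theta = 7/64$. This matches the paper's argument step for step, so I have nothing to add beyond confirming it is correct at the level of detail given.
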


\subsection{Proof of Proposition~\ref{prop:primes-ap}}

We combine Lemmas~\ref{lem:bd-T1}, \ref{lem:bd-T2}, \ref{lem:bd-TII} and~\ref{lemma:combinatorial lemma}. Setting~$\sigma = \varpi + \eps$ and recalling that~$\varpi < 1/8$, we obtain the conditions
\begin{align*}
  \frac{\varpi}{3(2+\varpi)} < \lambda &{} < \frac16 - \frac\varpi2, \\ 
  \frac{242\varpi}{75(2+\varpi)} < \delta & {}< \frac1{12} - \frac{\varpi}{2(2+\varpi)}, \\
  \frac{\varpi}{2(2+\varpi)} < \rho &{}< \frac19 - \frac{\varpi}{3(2+\varpi)}.
\end{align*}
The third is implied by our hypothesis on~$R$. The first two can be satisfied whenever~$-o(1) \leq \varpi < \frac{50}{1093}- o(1)$. This proves Proposition~\ref{prop:primes-ap}.

\bibliographystyle{amsalpha2}
\bibliography{bib}

\end{document}